\documentclass[amsthm,reqno,12pt]{amsart}
\usepackage[utf8]{inputenc}
\usepackage[english]{babel}
\usepackage[symbol]{footmisc}
\usepackage{amssymb,amsmath,amsthm,amsfonts,xcolor,enumerate,hyperref,comment,longtable,cleveref}

\usepackage{times}
\usepackage{cite}
\usepackage{pdflscape}
\usepackage{ulem}
\usepackage[mathcal]{euscript}
\usepackage{tikz}
\usepackage{hyperref}
\usepackage{cancel} 
\usepackage{stmaryrd}
\usepackage[all]{xy}
\usepackage{pdfsync}
\usepackage{ytableau}
\usepackage{rotating}

\usetikzlibrary{arrows}

\def\Perm{\mathrm{Perm}}

\def\Der{\mathrm{Der}}

\def\wt{\mathop{\fam 0 wt}\nolimits}

\let\<\langle
\let\>\rangle

\theoremstyle{definition}
\newtheorem{definition}{Definition}[section]

\theoremstyle{plain}
\newtheorem{lemma}[definition]{Lemma}
\newtheorem{Proposition}[definition]{Proposition}
\newtheorem{proposition}[definition]{Proposition}
\newtheorem{corollary}[definition]{Corollary}
\newtheorem{theorem}[definition]{Theorem}
\newenvironment{Proof}[1][Proof.]{\begin{trivlist}
		\item[\hskip \labelsep {\bfseries #1}]}{\flushright
		$\Box$\end{trivlist}}

\begin{document}

\title{Identities in differential perm algebras}

\author{F.A. Mashurov and B.K.Sartayev}

\address{SICM, Southern University of Science and Technology, Shenzhen, 518055, China}

\address{
SDU University, Kaskelen, Kazakhstan}

\email{f.mashurov@gmail.com}

\address{Narxoz University, Almaty, Kazakhstan}

\email{baurjai@gmail.com}

\subjclass[2020]{17A30, 17A50}
\keywords{Perm algebra, Lie algebra, Leibniz algebra, polynomial identities}


\begin{abstract}
Let $(P,\cdot,d)$ be a differential perm algebra over a field of characteristic $0$, i.e. an
associative algebra satisfying $(ab)c=(ba)c$ equipped with a derivation $d$.
We investigate polynomial identities  in the algebras obtained from $d$ by
the derived operations
\[
a\prec b=ab',\quad a\succ b=a'b,\quad
a\blacklozenge b=ab'+ba',\quad a\bullet b=a'b+ab',\quad
a\Diamond b=ab'-ba',\quad a\circ b=a'b-ab',
\]
where $a'=d(a)$.
Our first result shows that any nontrivial differential polynomial identity (not supported by the
right annihilator forced by the perm law) implies a purely differential consequence of the form
$a_1'a_2'\cdots a_m'=0$ for some positive integer $m$.
We then study the subalgebras of the free differential perm algebra 
generated by $X$ under $\blacklozenge$ and under $\bullet$, giving explicit generating sets and
computing the multilinear dimensions of their homogeneous components. Finally, we construct perm–Witt type Lie and Leibniz algebras arising naturally from differential perm algebras.
\end{abstract}

\maketitle

\section{Introduction}\label{sec:intro}

Let $k$ be a field of characteristic $0$.
An associative algebra $(P,\cdot)$ is called a perm algebra if it satisfies the
left-commutative identity
\begin{equation}\label{perm id}
x_1x_2x_3-x_2x_1x_3=0 \end{equation}

Perm algebras also have a natural operadic interpretation related to dialgebras and relations between quadratic operads (including Koszul duality and Manin products)
\cite{Loday2001,GK2014,Chapoton2001,Vallette2008}. 
Perm algebras also occur in studies of averaging operators and other constructions \cite{PeiBaiGuoNi2020}, which are closely related to several constructions of perm bialgebras \cite{ZhaoLiuPreprint2025}. The class of metabelian Lie algebras is precisely characterised by its relationship to perm algebras: they arise as commutator algebras of the latter, and every metabelian Lie algebra can be embedded into a perm algebra \cite{MashurovSartayev2024}.  Mutations of perm algebras provide another source of derived multiplications and identity questions; in \cite{KaygorodovMashurov2024}, mutation elements in free perm algebras are described
and polynomial identities of the mutated products are investigated. The notion of binary perm algebras was introduced in \cite{KunanbayevSartayev2026}.
On the bialgebra side, extension and factorization viewpoints for perm bialgebras are developed in
\cite{Hou2024}, and quasi-triangular/factorizable aspects are studied in \cite{LinQTPreprint2025, LinZhouBai2025}; see also the appearance of induced perm bialgebras from averaging infinitesimal bialgebras \cite{CuiHouOnline2026} and
related averaging/apre-perm phenomena \cite{ZhaoLiuPreprint2025}.

A \textit{differential perm algebra} is a perm algebra equipped with a derivation $d\colon P\to P$. The aim of this paper is to study polynomial identities in such differential objects and in the new multiplications naturally produced from~$d$.
Since every commutative associative algebra is a perm algebra, differential commutative algebras are a special case of differential perm algebras.   In particular, in the study of polynomial identities on commutative differential algebras it was shown that a nontrivial identity forces a strong purely differential consequence,
i.e.\ an identity in which only derivatives occur; see \cite{DotsenkoIsmailovUmirbaev2023}. 
The goal of this paper is to extend this circle of ideas from the commutative setting to the perm
setting.  The extension is not formal: perm algebras are in general noncommutative, and this
noncommutativity affects both the form of identities and the methods used to extract their
differential consequences.  Our main reduction theorem shows that, nevertheless, the perm
constraint is still strong enough to force an identity of the form
$a_1'a_2'\cdots a_m'=0$ under a natural nontriviality assumption.

A second point we emphasize is that one and the same differential perm algebra produces several
essentially different algebraic structures, depending on which derived operation is chosen.  Writing $a':=d(a)$, we will use
\[
a\prec b:=ab',\qquad a\succ b:=a'b,
\]
and also their (anti)symmetrizations
\[
a\blacklozenge b:=ab'+ba',\qquad
a\bullet b:=a'b+ab',\qquad
a\Diamond b:=ab'-ba',\qquad
a\circ b:=a'b-ab'.
\]
Although they look similar, they behave in different ways.  
The first studies in this direction were by Kolesnikov and Sartayev:
for every differential perm algebra, the product $a\prec b=ab'$ is left-symmetric (pre-Lie), and
they obtained a criterion for when a given left-symmetric algebra is embedded in a differential
perm algebra via this product \cite{KolesnikovSartayev2022}.
An additional part is developed in the paper \cite{SartayevDzhumadildaevMashurov2025}, in which the differential perm algebra, considered under derivative operations $ a\succ b:=a'b$, is used to implement and study the structures of Novikov dialgebras. In particular, the authors provide explicit monomial descriptions for
generated subalgebras and obtain polynomial identities satisfied by these derived
products. 

In this paper, we show that commutative product
$\blacklozenge$ satisfies Tortken-type identities \cite{Dzhumadil2002, Dzhumadil2005}, while the product $\bullet$ satisfies the
corresponding dialgebraic analogues.  In \cite{DzhumadilIsmailov2026Null}, the authors show using the embedding into a differential commutative associative algebra in characteristic $0$ that the space of null Lagrangians in the free Novikov algebra is exactly the subspace closed under the symmetrized product. We extend this idea to differential perm algebras: we describe the analogue symmetric and weak-symmetric elements by giving explicit generators (and, in the multilinear component, bases and dimension formulas) for $ST\langle X\rangle$ and $\overline{ST}\langle X\rangle$. The skew operation $\Diamond$ yields a Lie-type bracket in
the free differential perm setting, and the bracket $\circ$ leads to Leibniz-type structures (and more
generally to Leibniz brackets associated with $\delta$--derivations). Thus the same differential perm algebra $(P,\cdot,d)$ produces several different structures (Tortken, Tortken dialgebra, Lie, and Leibniz).

\medskip
\noindent\textbf{Outline of the paper.}
Section~2 fixes notation for $\Perm\langle X\rangle$ and $\Der\Perm\langle X\rangle$, recalls the bases, and introduces the grading conventions used throughout.
Section~3 establishes the reduction scheme for differential identities and proves the theorem
producing a derivative-only identity from a nontrivial one.
Section~4 treats symmetric elements and the $\blacklozenge$-product, including basis and multilinear
dimension results for the generated subalgebra.
Section~5 treats the corresponding questions for the $\bullet$-product.
The final sections discuss further bracket constructions and applications to perm-related Lie and Leibniz structures.

\section{Free perm and free differential perm algebras}\label{sec:prelim}

In this section, we fix the notation for the free perm algebra $\Perm\langle X\rangle$ and for the free
differential perm algebra $\Der\Perm\langle X\rangle$.

Let $X=\{x_1,x_2,\dots\}$ be a set and $\Perm\langle X\rangle$ be a free perm algebra generated by $X$.
Every element of $\Perm\langle X\rangle$ is a linear combination of monomials of the
form
\begin{equation}\label{eq:perm-basis}
x_{i_1}x_{i_2}\cdots x_{i_{n-1}}x_{i_n},
\qquad i_1\le i_2\le\cdots\le i_{n-1},\quad n\ge 1,
\end{equation}
where $i_n$ is arbitrary.
Denote by $B^{\Perm}_n$ the set of monomials \eqref{eq:perm-basis} of length $n$, and set
$B^{\Perm}:=\bigcup_{n\ge 1} B^{\Perm}_n$. Then $B^{\Perm}$ is a basis of $\Perm\langle X\rangle.$

\subsection{\texorpdfstring{The free differential perm algebra $\Der\Perm\langle X\rangle$}{The free differential perm algebra DerPerm<X>}}

A differential perm algebra is a perm algebra $(P,\cdot)$ equipped with a derivation
$d\colon P\to P$.
The free differential perm algebra on $X$, denoted $\Der\Perm\langle X\rangle$, can be described as the free perm algebra generated by the formal derivatives of $X$.

Introduce the set of symbols
\[
X^{(\omega)}:=\{x^{(s)}\mid x\in X,\ s\ge 0\},\qquad x^{(0)}:=x,
\]
and extend the derivation by the rule $d(x^{(s)})=x^{(s+1)}$ and the Leibniz rule
$d(uv)=d(u)v+u\,d(v)$.
Then $\Der\Perm\langle X\rangle$ is the perm algebra freely generated by $X^{(\omega)}$ together with this distinguished derivation $d$.

Thus, a standard set of monomials in $\Der\Perm\langle X\rangle$ is given by
\begin{equation}\label{eq:derperm-basis}
x_{k_1}^{(i_1)}x_{k_2}^{(i_2)}\cdots x_{k_{n-1}}^{(i_{n-1})}x_{k_n}^{(i_n)},
\qquad k_1\le\cdots\le k_{n-1},\quad i_j\ge 0,\quad n\ge 1,
\end{equation}
where the last index $k_n$ is arbitrary and the derivative orders $i_1,\dots,i_n$ are arbitrary
nonnegative integers.
We denote by $B_n$ the set of monomials \eqref{eq:derperm-basis} of length $n$, and set
\[
B:=\bigcup_{n\ge 1} B_n.
\]
Then $B$ is the basis of $\Der\Perm\langle X\rangle$.

\subsection{Degree and weight}

We use two simple gradings to organize computations: the \emph{degree} (length) and the
\emph{weight}.
The degree $\deg(u)$ of a basis monomial \eqref{eq:derperm-basis} is its length $n$.

The weight $\wt(\,\cdot\,)$ is defined recursively by the rules
\[
\wt(x)=-1,\quad x\in X;\qquad
\wt(d(u))=\wt(u)+1;\qquad
\wt(uv)=\wt(u)+\wt(v).
\]
In particular, for the generators $x^{(s)}=d^s(x)$ one has
\[
\wt\!\bigl(x^{(s)}\bigr)=\wt(x)+s=-1+s=s-1.
\]
Hence, for a basis monomial
\[
u=x_{k_1}^{(i_1)}\cdots x_{k_{n-1}}^{(i_{n-1})}x_{k_n}^{(i_n)}\in B_n,
\]
we obtain the explicit formula
\begin{equation}\label{eq:weight-formula}
\wt(u)=\sum_{j=1}^n \wt\!\bigl(x_{k_j}^{(i_j)}\bigr)=\sum_{j=1}^n (i_j-1)
=\Bigl(\sum_{j=1}^n i_j\Bigr)-n.
\end{equation}

This notation will be used in the study of subalgebras generated by derived products such as $\blacklozenge$ and $\bullet$.

\section{Identities in differential perm algebras}\label{sec:identities-diff-perm}

In this section, we study polynomial identities in differential perm algebras over a field of
characteristic $0$. Our main goal is to show that any nontrivial identity forces the algebra to satisfy a strong differential consequence, namely, an
identity involving only derivatives.

\medskip
For a differential perm algebra $(A,\cdot,d)$, we define the (right) annihilator by
\[
Ann_R(A):=\{\,x\in A\mid xA=0\,\}.
\]

\medskip
The next theorem is the key reduction result of this section: it transforms an arbitrary
nontrivial polynomial identity into a purely differential one.

\begin{theorem}\label{thm:derivative-identity}
Let $(A,\cdot,d)$ be a differential perm algebra over a field of characteristic $0$.
Assume that $P$ satisfies a nontrivial polynomial identity $f$, where nontrivial means that
$f$ is not an element of the right annihilator of the free differential perm algebra,
i.e.,
\[
f\notin Ann_R\bigl(\Der\Perm\langle X\rangle\bigr).
\]
Then $P$ satisfies a differential polynomial identity of the form
\[
a_1'a_2'\cdots a_m'=0
\]
for some $m\ge 2$.
\end{theorem}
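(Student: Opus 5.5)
First I pin down the right annihilator of the free algebra. In a perm algebra $(uv)w=(vu)w$, so every element of the form $uv-vu$ lies in $Ann_R$. I expect $Ann_R(\Der\Perm\langle X\rangle)$ to be exactly the span of commutators $[u,v]$, i.e.\ the kernel of the map to the free differential commutative algebra. In the basis \eqref{eq:derperm-basis} these are the combinations in which, for each commutative monomial, the coefficients of all its orderings with different last letters sum to zero. So the hypothesis $f\notin Ann_R$ means exactly that the image $\bar f$ of $f$ in the free differential commutative associative algebra is nonzero.

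Next I would transfer the identity. If $A$ satisfies $f$, then it also satisfies $g:=f\cdot y$ for a new variable $y$. Because $(uv)y=(vu)y$, multiplying on the right by $y$ makes all earlier factors commute. Hence $g$ depends only on $\bar f$: we can write $g=\tilde f\,y$, where $\tilde f$ is a commutative differential polynomial and $\tilde f\neq0$ by the first step. Using characteristic $0$, I would fully linearize, which keeps $\tilde f\neq 0$, and assume $\tilde f$ is multilinear in $x_1,\dots,x_n$.

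Now I run the commutative reduction argument of \cite{DotsenkoIsmailovUmirbaev2023}. The elements $u\,y$, with $u$ ranging over the commutative products of the $x_i^{(s)}$, behave like a module over a differential commutative algebra. The plan is to substitute suitable values, such as $x_i\mapsto$ products or derivatives of new variables, and to use the weight grading of \eqref{eq:weight-formula} to extract the component of minimal (or maximal) weight. The target is a consequence of the form $a_1'\cdots a_m'\,y^{(j)}=0$. In the commutative case this step yields $a_1'\cdots a_m'=0$ from any nonzero identity; I would replicate it verbatim, carrying along the trailing factor $y$, and apply $d$ when needed to move derivatives onto $y$.

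Finally I remove the tail $y$. The plan is to substitute $y\mapsto a_{m+1}'$ and to absorb the extra derivatives on $y$ by further substitutions. For instance, if I obtain $a_1'\cdots a_m'\,y=0$, substituting $y=a_{m+1}'$ already gives $a_1'\cdots a_{m+1}'=0$, which is of the required form with $m+1\ge2$. If instead the conclusion has $y^{(j)}$, I would substitute $y\mapsto$ a product such as $b\,c'$, or use $d$ together with induction on $j$, to reduce to the case $j=1$.

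I expect the main obstacle to be the third step: checking that the commutative argument survives the presence of the distinguished last factor. The reason is that $d$ acts on the last factor, so Leibniz produces terms in which the last position changes, and these terms are not commutative. I would handle this by always keeping a fresh underived $y$ last, and by using weight and degree in $y$ to separate components.
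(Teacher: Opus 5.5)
Your plan is sound, but it is organized differently from the paper's proof: you make the perm statement a corollary of the commutative theorem instead of redoing the elimination.

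\medskip
\noindent\textbf{Your first two steps.} These are right. The span of commutators is an ideal contained in $Ann_R$, and $fy=\bar f y$ for a fresh $y$. So $f\notin Ann_R$ means exactly $\bar f\neq 0$.

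\medskip
\noindent\textbf{Your third step.} The obstacle you anticipate disappears once the step is phrased structurally. The left multiplications $L_a\colon b\mapsto ab$ satisfy $L_aL_b=L_{ab}=L_{ba}$ and $L_{a'}=[d,L_a]$. Hence $L(A)=\{L_a\mid a\in A\}$ with $\delta=[d,-]$ is a differential commutative algebra, and $a\mapsto L_a$ is a morphism of differential algebras. It follows that $(L(A),\delta)$ satisfies the nonzero identity $\bar f$. The commutative theorem then gives $L_{a_1'}\cdots L_{a_m'}=0$, i.e.\ $a_1'\cdots a_m'y=0$ with $y$ underived, and the substitution $y\mapsto a_{m+1}'$ finishes. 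No derivative ever lands on $y$, so your discussion of $y^{(j)}$ is unnecessary.

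One caution: drop the idea of splitting identities into weight components, because these components need not be consequences. For example, take $ke\oplus kn$ with $e^2=n$, all other products zero, $d(e)=e$, $d(n)=2n$. It satisfies $(x'-x)y=0$ but neither $xy=0$ nor $x'y=0$. The commutative elimination works differently: substitute $x\mapsto xt$ and subtract the identity multiplied by $t$. This lowers the top derivative order exactly, with no splitting needed.

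\medskip
\noindent\textbf{The paper's route.} The paper performs precisely this elimination by hand inside $\Der\Perm\langle X\rangle$. Right multiplication by $u$ kills the noncommutative part of $h_0$. Iterated substitutions $y\mapsto yt$ yield $n!\,c_nt_1'\cdots t_n'yzu=0$. An induction on degree then handles $c_n$, and finally derivatives are substituted for the remaining underived factors.

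\medskip
\noindent\textbf{What each route buys.} Your reduction is short and explains why $f\notin Ann_R$ is exactly the right hypothesis, but it relies on the cited commutative theorem. The paper's computation is self-contained, but its bookkeeping needs care. For instance, if $a_k$ occurs only underived in $\bar f$, then $h_0u=0$ identically. So $a_k$ must be chosen among the differentiated variables, and the derivative-free case has to be treated separately.
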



\begin{Proof} Since field of characteristic $0$ we may assume that a nontrivial polynomial identity $f$ is a multilinear. That is, $$f(a_1,\dots,a_k)=0.$$ 
Then $f$ can be written as

\begin{equation}\label{eq:decomp-f}
f(a_1,\dots,a_k)=\sum_{s=0}^n g_s\,a_k^{(s)}+\sum_{s=0}^n a_k^{(s)}\,p_s,
\end{equation}
where each $g_s,p_s$ is a differential polynomial in $a_1,\dots,a_{k-1}$, and at least one of $g_n,p_n$ is not the zero polynomial.

Consider the following polynomial
\[
h_0(a_1,\dots,a_{k-1},y,z):= f(a_1,\dots,a_{k-1},y)\,z - f(a_1,\dots,a_{k-1},z)\,y.
\]

Substituting \eqref{eq:decomp-f} (with $a_k=y$ and $a_k=z$) we obtain
\[
h_0=\sum_{s=0}^n (g_s+p_s) y^{(s)}z-\sum_{s=0}^n (g_s+p_s) z^{(s)}y.
\]

Thus $h_0$ simplifies to
\begin{equation}\label{eq:h0}
h_0=\sum_{s=1}^n c_s\,(y^{(s)}z-z^{(s)}y) +c_0(yz-zy) ,\qquad c_s:=g_s+p_s.
\end{equation}

Let us multiply $h_0$ by right hand side by $u$, then we have

\begin{equation}\label{eq:h0u}
h_0u=\sum_{s=1}^n c_s\,(y^{(s)}z-z^{(s)}y)u.
\end{equation}

Assume that $n$ is the highest derivation degree of the variable $a_k$ occurring in the polynomial, with $c_n \ne 0$ (exists because $f\notin Ann (DerPerm(X))$).
Then, we have $h_0$ is a linear combination of $$y^{(s)}z-z^{(s)}y$$ with $1\leq s\leq n,$ and the term with $s=n$ appears with the coefficient $c_n.$
Then we define
\[
h_1(y,z,t)u:=h_0(yt,z)u - h_0(y,z)tu-h_0(zt,y)u+h_0(z,y)tu.
\]

Since, 

$$h_0(yt,z)u - h_0(y,z)tu=$$
$$\sum_{s=1}^n c_s (\sum_{i=0}^s \binom{s}{i} y^{(s-i)}t^{(i)}z-z^{(s)}yt-(y^{(s)}z-z^{(s)}y)t)u=\sum_{s=1}^n c_s (\sum_{i=1}^s \binom{s}{i} y^{(s-i)}t^{(i)}z)u.$$

We obtain 

$$h_1(y,z,t)u=$$$$\sum_{s=1}^n c_s (\sum_{i=1}^s \binom{s}{i} y^{(s-i)}t^{(i)}z)u-\sum_{s=1}^n c_s (\sum_{i=1}^s \binom{s}{i} z^{(s-i)}t^{(i)}y)u=$$
$$\sum_{s=1}^{n-1} \sum_{i=1}^{n-s} c_{s+i}(\binom{s+1}{i} t^{(i)} (y^{(s)}z-z^{(s)}y))u.$$

Again we see that  that the polynomial $h_1(y,z,t)$ remains a linear combination of the
expressions $y^{(s)}z - z^{(s)}y$, but now the index $s$ ranges from $1$ to $n-1$.
Moreover, the term corresponding to $s = n-1$ appears with coefficient $n c_n t'$.
Repeating the same argument inductively, we eventually arrive at the polynomial
\[
h_{n-1}(y,z,t_1,\dots,t_{n-1})u
=
n! \, c_n \, t_1' \cdots t_{n-1}' \,(y'z - yz')u.
\]
Finally, substituting $y t_n$ in place of $y$ and subtracting the term multiplied
by $t_n$, we obtain
\[
h_{n-1}(y t_n, z, t_1,\dots,t_{n-1})u
-
h_{n-1}(y,z,t_1,\dots,t_{n-1})\, t_nu
=
n! \, g_n \, t_1' \cdots t_{n-1}' t_n' \, yzu .
\]

By induction on the degree of the polynomial, we obtain that
\[
a_1' \cdots a_p' a_{p+1} \cdots a_{p+q}=0,
\]
for some integers $p$ and $q$.
It remains to apply the substitutions $a_{p+i} \mapsto a_{p+i}'$
to obtain
\[
a_1' \cdots a_m',
\quad \text{where } m = p+q,
\]
which completes the proof. 


\end{Proof}


\section{\texorpdfstring{Symmetric Elements and the $\blacklozenge$-Product in $\Der\Perm\langle X\rangle$}{Symmetric Elements and the blacklozenge-product in DerPerm<X>}}

In this section, we study the subspace $ST\langle X\rangle\subseteq \Der\Perm\langle X\rangle$ generated from $X$ by the operation $a\blacklozenge b=ab'+ba'$.
Introducing the auxiliary Leibniz-type operator $*$, we describe an explicit set of symmetric generators and show that it lies in $ST\langle X\rangle$.
As an application, we obtain a basis of $ST(X)$.

Let $\Der\Perm\langle X\rangle$ be the free differential perm algebra on $X$ with derivation $d$.

A commutative algebra $(A,\ast)$ is called a {Tortken algebra} if it satisfies the
 identity
\begin{equation}\label{eq:tortken1}
 (a\ast b)\ast (c\ast d)
-(a\ast d)\ast (c\ast b)
+\langle a,b,c\rangle_\ast \ast d
-\langle a,d,c\rangle_\ast \ast b=0,
\end{equation}
for all $a,b,c,d\in A$, where $\langle a,b,c\rangle_\ast:=(a\ast b)\ast c-a\ast(b\ast c)$ is the associator.

Indeed, if $(A,\cdot,d)$ is a commutative associative differential algebra and one defines
\begin{equation}\label{eq:symmdiff}
a\ast b := d(a)b+a\,d(b)=a'b+ab',
\end{equation}
then $(A,\ast)$ is a commutative algebra which satisfies the identity
\eqref{eq:tortken1} and an identity in degree five (see \ \cite{Dzhumadil2002,Dzhumadil2005}).
In this section, we extend this viewpoint to the perm setting: although perm algebras are not commutative in general. Surprisingly, the symmetrized differential product
\[
a\blacklozenge b:=ab'+ba'
\]
endows $\Der\Perm\langle X\rangle$ with the structure of a Tortken-type algebra: besides being commutative, it satisfies the Tortken identity and an additional identity of degree $5$, as we see in the next proposition.

\begin{Proposition}\label{prop:lozenge-identities}
The algebra $(\Der\Perm\langle X\rangle, \blacklozenge)$ 
is commutative and satisfies the Tortken identity
\begin{equation}\label{eq:tortken}
 (a\blacklozenge b)\blacklozenge (c\blacklozenge d)
-(a\blacklozenge d)\blacklozenge (b\blacklozenge c)
+\langle a,b,c\rangle\blacklozenge d
-\langle a,d,c\rangle\blacklozenge b=0,
\end{equation}
as well as the following polynomial identity of degree $5$:
\begin{equation}\label{eq:deg5}
\begin{aligned}
&\quad\quad 2\bigl((((b\blacklozenge a)\blacklozenge c)\blacklozenge d)\blacklozenge e\bigr)
-\bigl((((b\blacklozenge a)\blacklozenge c)\blacklozenge e)\blacklozenge d\bigr)\\
&\quad-2\bigl((((b\blacklozenge a)\blacklozenge d)\blacklozenge c)\blacklozenge e\bigr)
+\bigl((((b\blacklozenge a)\blacklozenge d)\blacklozenge e)\blacklozenge c\bigr)\\
&\quad-\bigl((((c\blacklozenge a)\blacklozenge b)\blacklozenge d)\blacklozenge e\bigr)
+\bigl((((c\blacklozenge a)\blacklozenge b)\blacklozenge e)\blacklozenge d\bigr)\\
&\quad+\bigl((((c\blacklozenge a)\blacklozenge d)\blacklozenge e)\blacklozenge b\bigr)
-\bigl((((c\blacklozenge a)\blacklozenge e)\blacklozenge b)\blacklozenge d\bigr)\\
&\quad+\bigl((((d\blacklozenge a)\blacklozenge b)\blacklozenge c)\blacklozenge e\bigr)
-\bigl((((d\blacklozenge a)\blacklozenge b)\blacklozenge e)\blacklozenge c\bigr)\\
&\quad-\bigl((((d\blacklozenge a)\blacklozenge c)\blacklozenge e)\blacklozenge b\bigr)
+\bigl((((d\blacklozenge a)\blacklozenge e)\blacklozenge b)\blacklozenge c\bigr)\\
&\quad+\bigl((((e\blacklozenge a)\blacklozenge b)\blacklozenge c)\blacklozenge d\bigr)
-\bigl((((e\blacklozenge a)\blacklozenge b)\blacklozenge d)\blacklozenge c\bigr)=0.
\end{aligned}
\end{equation}
Here
\[
\langle a,b,c\rangle:=(a\blacklozenge b)\blacklozenge c-a\blacklozenge(b\blacklozenge c)
\]
denotes the associator of $\blacklozenge$.
\end{Proposition}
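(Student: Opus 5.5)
Commutativity is immediate from the definition: $a\blacklozenge b=ab'+ba'=b\blacklozenge a$. For the two identities, my plan is to avoid verifying them monomial by monomial in $\Der\Perm\langle X\rangle$ and instead reduce to the known commutative case. The key observation is that in a perm algebra every product of length at least two is determined by its last factor together with the \emph{multiset} of the preceding factors. Precisely, $u_1\cdots u_{n-1}u_n$ depends on $u_1,\dots,u_{n-1}$ symmetrically. So $\Der\Perm\langle X\rangle$ embeds linearly into $C\otimes \Der\Perm\langle X\rangle_1$ (roughly, ``commutative part $\otimes$ last letter''). Here $C$ is the free differential commutative algebra.

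The cleanest route is to use the standard embedding of the free perm algebra into $C\otimes V$ with product $(p\otimes v)(q\otimes w)=p\,\varepsilon(q\otimes w)\otimes w$. Here $\varepsilon(q\otimes w)=qw\in C$ is the map forgetting the distinguished position. The derivation is extended by $d(p\otimes v)=p'\otimes v+p\otimes v'$. Concretely, I would show that for $\blacklozenge$-monomials, every element of $ST\langle X\rangle$ of degree $\ge 1$ is a sum $\sum_j \Phi_j\, x_{i_j}^{(s_j)}$, where $\Phi_j$ is obtained from a single commutative differential polynomial by a formal ``marking'' operation.

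Step 1: define a linear map $\pi:\Der\Perm\langle X\rangle\to C$, $u_1\cdots u_n\mapsto u_1\cdots u_n$ computed commutatively. Check that it is a homomorphism of differential algebras, and that $\pi(a\blacklozenge b)=\pi(a)\ast\pi(b)$ with $\ast$ as in \eqref{eq:symmdiff}.

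Step 2: show that a multilinear $\blacklozenge$-polynomial $F$ vanishes on $\Der\Perm\langle X\rangle$ as soon as $\partial_x\pi(F)$-type data vanish. Concretely, write $F(a_1,\dots,a_k)=\sum_{\ell} G_\ell \cdot(\text{last letter from }a_\ell)$. By the perm law and the Leibniz rule, the coefficient attached to the last letter coming from $a_\ell$ is obtained by a ``polarization'': replace $a_\ell$ by $a_\ell\,t$ with $t$ a new commuting marker, take the part linear in the marker, then remove it.

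So the plan is to prove that the identity in $\Der\Perm$ is equivalent to a family of identities in $(C,\ast)$ involving one extra argument. Then I would invoke the results of \cite{Dzhumadil2002,Dzhumadil2005} that $(C,\ast)$ satisfies \eqref{eq:tortken1} and the degree-5 identity, and check that these identities are stable under the marking substitution. That check is essentially linearizing the Tortken identity in one argument.

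The main obstacle is Step 2: the marking is not obviously compatible with $\ast$, since $(at)\ast b\neq (a\ast b)t$. If this reduction becomes messy, I would fall back on direct verification. Both sides are multilinear of degree 4 (respectively 5). I would expand each $\blacklozenge$-monomial via $a\blacklozenge b=ab'+ba'$ into the basis \eqref{eq:derperm-basis}, grouped by which variable sits last and by the derivative pattern. Weight is preserved ($\wt(a\blacklozenge b)=\wt a+\wt b+1$), so only monomials of weight $-1$ (respectively $-1$) in four (five) letters occur. That keeps the number of basis monomials small: derivative orders summing to $3$ (respectively $4$), with a designated last letter. The coefficient comparison for \eqref{eq:tortken} is a finite check, and for \eqref{eq:deg5} it is a larger but still finite check, ideally done by computer. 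I expect the degree-5 identity to be the real labor.
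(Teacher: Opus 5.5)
Your fallback is exactly the paper's proof. The paper only says that the identities follow by direct computation, and your plan does that computation: expand every $\blacklozenge$-monomial in the basis \eqref{eq:derperm-basis}, sorted by the variable in the last position. This is a finite check, because $\blacklozenge$ raises the total derivative order by one. Both identities are in fact true, so that route is complete. One slip: the product you wrote on $C\otimes V$ drops $v$ and is not associative. It should be $(p\otimes v)(q\otimes w)=\varepsilon(p\otimes v)\,q\otimes w=pvq\otimes w$.

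The reduction you lead with is not a proof as written, and the gap is Step 2.

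\begin{itemize}
\item Step 1 only gives the necessary condition $\pi(F)=0$. Step 2 must recover the summands of $F$ whose last letter comes from a given variable, and your marker does not do this. If $t'=0$, then $t$ factors out of every $\ast$-product and you only get $t\,\pi(F)$. If $t'\neq 0$, you get the extra terms you noticed yourself.
\item Checking that the commutative identities are ``stable under marking'', or linearizing them in one argument, is not what is needed.
\item The missing observation is that these components are already determined by $\pi(F)$. Write a $\blacklozenge$-monomial of degree $\ge 2$ as $w=u\blacklozenge v$ and put $G_w:=\pi(u)\pi(v)$, so $\pi(w)=d(G_w)$.
\item The derivation $d$ never changes which variable occupies the last position. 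So if $x_\ell$ occurs in $v$, the summand of $w=uv'+vu'$ ending in a derivative of $x_\ell$ is $u\cdot(\text{the $x_\ell$-part of } v)'$.
\item Let $d_\ell$ be the derivation that differentiates only $x_\ell$. An induction using $d\,d_\ell=d_\ell\,d$ shows that the commutative image of this summand is $d_\ell(G_w)$.
\item Hence, for multilinear $F=\sum c_iw_i$ with $G_F:=\sum c_iG_{w_i}$, you have $\pi(F)=d(G_F)$. The $x_\ell$-component of $F$ is $d_\ell(G_F)$ with the factor $x_\ell^{(s)}$ moved to the end.
\item Since $d$ is injective on multilinear differential polynomials of positive degree, $\pi(F)=0$ forces $G_F=0$, and hence $F=0$.
\end{itemize}

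Your ``$\partial_x\pi(F)$-type data'' intuition was pointing at exactly this. In the paper's language: $ST\langle X\rangle$ lies in the span of the $u^*$, $u^*$ depends only on $\pi(u)$, and $\pi(u^*)=\pi(u)'$. So $\pi$ is injective on $ST\langle X\rangle$.

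Completed this way, your route gives more than the paper's. It shows that $(\Der\Perm\langle X\rangle,\blacklozenge)$ and the commutative algebra $(C,\ast)$ satisfy the same multilinear identities. The Tortken identity is therefore inherited from the commutative case, with no perm-specific computation.

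For \eqref{eq:deg5}, do not rely on the citation unless it states this exact identity. The commutative check is short using symbols. Under $u^{(i)}\mapsto\lambda_u^{\,i}$, the left-normed product $(\cdots(u_1\ast u_2)\ast\cdots)\ast u_5$ becomes $\prod_{j=2}^{5}(\lambda_{u_1}+\cdots+\lambda_{u_j})$. After removing the common factor $\lambda_a+\lambda_b+\lambda_c+\lambda_d+\lambda_e$, the fourteen terms must cancel as a cubic polynomial identity. That sum is antisymmetric in $\lambda_c,\lambda_d$, which makes the cancellation easy to verify by hand.
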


\begin{Proof}
All identities
follow by a straightforward direct computation.
\end{Proof}

Denote by $ST\langle X\rangle$ the smallest linear subspace of $\Der\Perm\langle X\rangle$
containing $X$ and closed under the symmetrized product
\[
a\blacklozenge b:=ab'+ba'.
\]
In order to relate $\blacklozenge$-closure to derivations of monomials, we introduce an auxiliary
linear operator $*: \Der\Perm\langle X\rangle\to \Der\Perm\langle X\rangle$ defined recursively by
\[
\bigl(x^{(i)}\bigr)^{*}=d\bigl(x^{(i)}\bigr)=x^{(i+1)},\qquad (uv)^{*}=u\,v^{*}+v\,u^{*},
\]
for $x^{(i)}\in X^{(w)}$ and $u,v\in \Der\Perm\langle X\rangle$.
This operator can be viewed as a “formal Leibniz derivation’’ adapted to our setting.

The next proposition collects the basic invariance properties of $*$ that will be used repeatedly.
In particular, it allows us to reorder factors inside a monomial before applying $*$, and it
identifies the second iteration of $*$ with the action of the derivation $d$.


 
\begin{Proposition} For $x^{(i_1)}_1, x^{(i_2)}_2,\ldots, x^{(i_n)}_n\in X^{(w)}$ and $\sigma\in S_n$ we have 
\begin{equation}\label{eq u*=u_g*}
     (x^{(i_1)}_1 x^{(i_2)}_2\cdots x^{(i_n)}_n)^*=(x^{(i_{\sigma(1)})}_{\sigma(1)} x^{(i_{\sigma(2)})}_{\sigma(2)}\cdots x^{(i_{\sigma(n)})}_{\sigma(n)})^*
 \end{equation}
and 

\begin{equation}\label{eq u**=u'*}((x^{(i_1)}_1 x^{(i_2)}_2\cdots x^{(i_n)}_n)^*)^*=((x^{(i_1)}_1 x^{(i_2)}_2\cdots x^{(i_n)}_n)')^*\end{equation}
\end{Proposition}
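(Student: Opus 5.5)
The plan is to first find a closed formula for $*$ on a monomial, and then read off both \eqref{eq u*=u_g*} and \eqref{eq u**=u'*} from it. Write $y_j:=x_j^{(i_j)}\in X^{(\omega)}$ and $w=y_1y_2\cdots y_n$. I claim that
\[
(y_1y_2\cdots y_n)^{*}=\sum_{j=1}^{n}\Bigl(\prod_{i\neq j}y_i\Bigr)\,y_j' ,
\]
where $\prod_{i\neq j}y_i$ is the product of the remaining factors in any order. This notation is unambiguous in $\Der\Perm\langle X\rangle$: by associativity and the left-commutative identity \eqref{perm id}, the factors of a monomial standing before the last one may be permuted freely.

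I will prove the claim by induction on $n$. For $n=1$ it is the definition $(x^{(i)})^*=x^{(i+1)}$. For $n=2$, $(y_1y_2)^*=y_1y_2'+y_2y_1'$, which is exactly the formula. For the inductive step, split $w=u\,y_n$ with $u=y_1\cdots y_{n-1}$. The recursion gives
\[
w^*=u\,y_n'+y_n\,u^*.
\]
The first term is the $j=n$ summand. By the inductive hypothesis,
\[
y_n u^*=\sum_{j<n} y_n\Bigl(\prod_{i\neq j,\,i<n}y_i\Bigr)y_j'.
\]
Moving $y_n$ into the prefix by \eqref{perm id} gives the $j<n$ summands.

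The same computation works for any splitting $w=uv$ into two nonempty factors. The reason is that $u\,v^*$ supplies the terms with the derivative on a factor of $v$, and $v\,u^*$ supplies those with the derivative on a factor of $u$, in both cases after reordering prefixes. So the formula also shows that $*$ is well defined on $\Der\Perm\langle X\rangle$, independently of how a monomial is bracketed. I expect this consistency point, together with bookkeeping the perm reordering correctly, to be the only real obstacle. The right-hand side is visibly symmetric under permuting $y_1,\dots,y_n$, which gives \eqref{eq u*=u_g*} at once.

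For \eqref{eq u**=u'*}, apply the closed formula again to each summand $\bigl(\prod_{i\neq j}y_i\bigr)y_j'$, viewed as a monomial in the generators $y_i$ ($i\neq j$) and $y_j'\in X^{(\omega)}$. This gives
\[
(w^*)^*=\sum_{j}\Bigl(\prod_{i\neq j}y_i\Bigr)y_j''+\sum_{j\neq k}\Bigl(\prod_{i\neq j,k}y_i\;y_j'\Bigr)y_k'.
\]
On the other side, by the Leibniz rule $w'=\sum_j y_1\cdots y_j'\cdots y_n$. Applying the formula to each of these monomials yields, for fixed $j$, the term with $y_j''$ last and the terms $\bigl(\prod_{i\neq j,k}y_i\,y_j'\bigr)y_k'$ for $k\neq j$. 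Summing over $j$ produces exactly the same two sums, again using \eqref{perm id} to identify prefixes. Hence $(w^*)^*=(w')^*$.
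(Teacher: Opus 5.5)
Your proof is correct, but it takes a different route from the paper.

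The paper proves the first identity by noting that, by left-commutativity, permuting the first $n-1$ factors does not change the monomial at all. So it only checks invariance under swapping the last two factors. It expands $(z\,x_{n-1}x_n)^*$ with $z=x_1\cdots x_{n-2}$ twice via the recursion and observes that the result, $z\,x_{n-1}x_n'+x_n z\,x_{n-1}'+x_nx_{n-1}z^*$, is symmetric in $x_{n-1},x_n$ modulo the perm identity. (The paper's display has a stray prime on $x_{n-1}$ in the last term.) The second identity is then simply asserted to follow from the first.

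You instead derive the closed formula $(y_1\cdots y_n)^*=\sum_j\bigl(\prod_{i\neq j}y_i\bigr)y_j'$, from which the symmetry is immediate. This costs a little more up front but gives two things the paper leaves implicit:
\begin{enumerate}
\item it shows the recursive definition of $*$ is consistent, i.e. independent of the splitting $w=uv$ and of the word chosen to represent a perm monomial;
\item it exhibits $w^*$ explicitly, which is what the paper's one-line deduction of the second identity actually needs.
\end{enumerate}

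With your formula in hand, the double expansion in your last paragraph is unnecessary. The $j$-th summand $\bigl(\prod_{i\neq j}y_i\bigr)y_j'$ of $w^*$ is a rearrangement of the $j$-th summand $y_1\cdots y_j'\cdots y_n$ of $w'$. Applying the first identity summand by summand therefore gives $(w^*)^*=(w')^*$ at once, which is presumably the argument the paper intends.
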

\begin{Proof}
Since the algebra satisfies the left-commutative identity,  it is enough to prove that
$$(x^{(i_1)}_1 x^{(i_2)}_2\cdots  x^{(i_{n-1})}_{n-1} x^{(i_n)}_n)^*=(x^{(i_1)}_1 x^{(i_2)}_2\cdots x^{(i_{n-2})}_{n-2} x^{(i_n)}_n x^{(i_{n-1})}_{n-1} )^*.$$ 

Set 
\[
z = x^{(i_1)}_1 x^{(i_2)}_2 \cdots x^{(i_{n-2})}_{n-2}.
\]
Using the definition of the $^*$ – operation, we have
\begin{align*}(z  x^{(i_{n-1})}_{n-1} x^{(i_n)}_n)^*& = (z x^{(i_{n-1})}_{n-1} )(x^{(i_n)}_n)^* + (x^{(i_n)}_n)(z x^{(i_{n-1})}_{n-1} )^*\\
& = z x^{(i_{n-1})}_{n-1} x^{(i_n+1)}_n + x^{(i_n)}_n z x^{(i_{n-1}+1)}_{n-1}+x^{(i_n)}_n  x^{(i_{n-1}+1)}_{n-1} (z)^*\\
&= (z   x^{(i_n)}_n x^{(i_{n-1})}_{n-1})^*.
\end{align*}

which proves the required equality.

The second equality follows from \eqref{eq u*=u_g*}. This completes the proof.

\end{Proof}

Before proceeding, we record a simple but useful observation: the operator $*$ increases the
weight by exactly one. This proposition will be used repeatedly when tracking which
$*$-images may appear in the construction of $ST\langle X\rangle$.

\begin{Proposition}\label{prp: wt(u^*)}
For every $u\in \Der\Perm\langle X\rangle$ we have
\[
\wt(u^*)=\wt(u)+1.
\]
In particular, if $\mathrm{wt}(u)=-2$, then $\mathrm{wt}(u^{*})=-1$.
\end{Proposition}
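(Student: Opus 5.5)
Since both $\wt$ and $*$ are defined on monomials and extended linearly, and every element of $\Der\Perm\langle X\rangle$ is a linear combination of basis monomials from $B$, we prove the statement for homogeneous elements of fixed weight. For an arbitrary (non-homogeneous) $u$, the claim is understood componentwise: $*$ sends the weight-$w$ component to weight $w+1$. It therefore suffices to show that for every monomial $u=x_{k_1}^{(i_1)}\cdots x_{k_n}^{(i_n)}$, the element $u^*$ is a linear combination of monomials, each of weight $\wt(u)+1$.

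We argue by induction on the degree $n$, following the recursive definition of $*$. For $n=1$ we have $u=x^{(i)}$ and $u^*=x^{(i+1)}$. By the formula $\wt(x^{(s)})=s-1$ this has weight $i=\wt(u)+1$.

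For $n\ge 2$, write $u=vw$, where $v$ is a monomial of degree $n-1$ and $w=x_{k_n}^{(i_n)}$. Then
\[
u^*=v\,w^*+w\,v^*.
\]
The weight is additive on products by definition, and it does not depend on reordering factors. This is consistent with the perm identity, since \eqref{eq:weight-formula} is symmetric in the factors, so rewriting $w\,v^*$ in the basis \eqref{eq:derperm-basis} preserves weight.
\begin{itemize}
\item The first term has weight $\wt(v)+\wt(w)+1$.
\item By the induction hypothesis, $v^*$ is homogeneous of weight $\wt(v)+1$, so $w\,v^*$ has weight $\wt(w)+\wt(v)+1$.
\end{itemize}
Both terms thus have weight $\wt(u)+1$, which completes the induction. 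The special case follows immediately: $\wt(u)=-2$ gives $\wt(u^*)=-1$.

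The only point requiring care is well-definedness. The recursion $(uv)^*=uv^*+vu^*$ is applied to a chosen factorization of a basis monomial, and perm relations could a priori mix monomials. This causes no problem, because every perm relation $x_1x_2x_3-x_2x_1x_3$ is weight-homogeneous, so weight is a genuine grading on $\Der\Perm\langle X\rangle$. Moreover, the preceding proposition shows that $*$ is compatible with reordering. I expect no serious obstacle beyond this check.
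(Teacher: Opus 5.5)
Your proof is correct and follows essentially the same route as the paper: induction on the degree, with the base case $(x^{(i)})^*=x^{(i+1)}$ and an inductive step using $(vw)^*=v\,w^*+w\,v^*$ together with additivity of the weight. Your extra remarks go slightly beyond the paper: you read the statement componentwise for non-homogeneous $u$, and you note that the perm relations are weight-homogeneous, so the weight is a genuine grading. Both points are left implicit in the paper's argument.
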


\begin{proof}
We argue by induction on $\deg(u)$ (the length of $u$). If $u=x^{(i)}$ is a generator, then by definition
\[
(x^{(i)})^* = d(x^{(i)}) = x^{(i+1)}.
\]
Hence
\[
\wt\bigl((x^{(i)})^*\bigr)=\wt\bigl(x^{(i+1)}\bigr)=\wt\bigl(x^{(i)}\bigr)+1,
\]
since $\wt(d(v))=\wt(v)+1$ by the definition of weight.

Let $u=vw$ with $\deg(v),\deg(w)<\deg(u)$. By definition of $*$,
\[
(vw)^* = v\,w^* + w\,v^*.
\]
Using $\wt(ab)=\wt(a)+\wt(b)$ and the induction hypothesis, we obtain
\[
\wt(v\,w^*)=\wt(v)+\wt(w^*)=\wt(v)+\wt(w)+1=\wt(vw)+1,
\]
and similarly
\[
\wt(w\,v^*)=\wt(w)+\wt(v^*)=\wt(w)+\wt(v)+1=\wt(vw)+1.
\]
Thus both summands $v\,w^*$ and $w\,v^*$ are homogeneous of weight $\wt(vw)+1$.
Therefore, their sum is also homogeneous of the same weight, and hence
\[
\wt\bigl((vw)^*\bigr)=\wt(vw)+1.
\]
This completes the induction and proves the proposition.
\end{proof}

With these identities in hand, we now single out the family of elements that will serve as
our basic “symmetric generators’’ for $ST\langle X\rangle$.
Namely, let
\[
S := X \cup \{\, u^* \mid u\in B,\ \mathrm{wt}(u) = -2 \}.
\]
We call the elements of the linear span $\langle S\rangle$ symmetric elements.
The following lemma shows that, at the level of linear spans, this family of symmetric elements is stable under the
product $\blacklozenge$.

\begin{lemma}\label{lem: S closed}
The linear span $\langle S\rangle$ is closed under the product $a\blacklozenge b = ab' + ba'.$
\end{lemma}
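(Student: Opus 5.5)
The plan is to show that for $a,b\in S$ the product $a\blacklozenge b$ is again the $*$-image of an element of weight $-2$. The key point will be that every $a\in\langle S\rangle$ satisfies $a'=a^{*}$. Once that is established, the definition of $*$ gives
\[
a\blacklozenge b=ab'+ba'=ab^{*}+ba^{*}=(ab)^{*}.
\]
Since $\blacklozenge$ is bilinear, it suffices to treat $a,b\in S$.

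\emph{Step 1: $d$ and $*$ commute.} I will show $(u^{*})'=(u')^{*}$ for all $u\in\Der\Perm\langle X\rangle$, by induction on $\deg(u)$.
\begin{itemize}
\item For a generator, $(x^{(i)})^{*}=x^{(i+1)}$, and both sides equal $x^{(i+2)}$.
\item For $u=vw$, expand both sides:
\[
d\bigl(v w^{*}+w v^{*}\bigr)=v'w^{*}+v(w^{*})'+w'v^{*}+w(v^{*})',
\]
\[
(v'w+vw')^{*}=v'w^{*}+w(v')^{*}+v(w')^{*}+w'v^{*}.
\]
These agree by the induction hypothesis.
\end{itemize}
One point needs checking: $*$ must be well defined on $\Der\Perm\langle X\rangle$, i.e.\ compatible with the left-commutative relation. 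This is exactly what \eqref{eq u*=u_g*} guarantees, so the recursive definition can be applied to arbitrary products $vw$.

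\emph{Step 2: $a'=a^{*}$ for $a\in S$.}
\begin{itemize}
\item For $a=x\in X$ this holds by definition.
\item For $a=u^{*}$ with $u\in B$ and $\wt(u)=-2$, use \eqref{eq u**=u'*} and then Step 1:
\[
a^{*}=(u^{*})^{*}=(u')^{*}=(u^{*})'=a'.
\]
\end{itemize}
By linearity, $a'=a^{*}$ for every $a\in\langle S\rangle$.

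\emph{Step 3: conclusion.} For $a,b\in S$ we now have $a\blacklozenge b=(ab)^{*}$. Each element of $S$ is homogeneous of weight $-1$: this is clear for $X$, and follows from Proposition~\ref{prp: wt(u^*)} for $u^{*}$. Hence $ab$ is homogeneous of weight $-2$. Rewriting $ab$ in the basis $B$ uses only left-commutativity, which preserves weight, so $ab=\sum\lambda_j w_j$ with $w_j\in B$ and $\wt(w_j)=-2$. Linearity of $*$ then gives
\[
a\blacklozenge b=\sum\lambda_j w_j^{*}\in\langle S\rangle .
\]

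I expect the main obstacle to be Step 1: the commutation $d\circ *=*\circ d$, together with the well-definedness of $*$ modulo the perm relation. The remaining steps are formal consequences of it and of the previous two propositions.
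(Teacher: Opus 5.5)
Your proposal is correct and follows the paper's own argument: for $a,b\in S$ one has $a'=a^{*}$, hence $a\blacklozenge b=(ab)^{*}$ with $\wt(ab)=-2$. The only structural difference is that you treat all pairs at once, whereas the paper separates the cases $x\blacklozenge y$, $u^{*}\blacklozenge x$, $u^{*}\blacklozenge v^{*}$. Your Step~1 also makes explicit the commutation $(u^{*})'=(u')^{*}$, which the paper uses tacitly when it replaces $(u^{*})'$ by $(u^{*})^{*}$ while citing only $(u^{*})^{*}=(u')^{*}$; and your Step~3 correctly expands $ab$ in the basis $B$ instead of treating a product such as $u^{*}x$ as a single basis monomial.
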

\begin{proof}
Since $\blacklozenge$ is bilinear, it suffices to check closure for generators of $S$.

\medskip
\noindent

Let $x,y\in X$. Then
\[
x\blacklozenge y = xy' + yx'.
\]
By the definition of the $*$–operation,
\[
(xy)^* = xy' + yx'.
\]
Since
\[
\mathrm{wt}(xy)=\mathrm{wt}(x)+\mathrm{wt}(y)=-1-1=-2,
\]
we obtain $(xy)^*\in S$. Hence
\[
x\blacklozenge y\in \langle S\rangle.
\]

Let $u^*\in S$ with $\mathrm{wt}(u)=-2$, and let $x\in X$. Then, using the \eqref{eq u**=u'*}, we obtain

\[
u^*\blacklozenge x = u^*x' + x(u^*)^*
=(u^*x)^*.
\]

Since $x\in X$, we have $\mathrm{wt}(x)=-1$, and by Proposition \ref{prp: wt(u^*)}

\[
\mathrm{wt}(u^*x)=\mathrm{wt}(u^*)+\mathrm{wt}(x)=(-1)+(-1)=-2.
\]

Hence $u^*\blacklozenge x\in \langle S\rangle$.

Let $u^*,v^*\in S$. Then
\[
u^*\blacklozenge v^*
= u^*(v^*)^* + v^*(u^*)^*
= (u^*v^*))^*.
\]

By Proposition \ref{prp: wt(u^*)}, we have 
\[
\mathrm{wt}(u)=\mathrm{wt}(v)=-2,
\qquad
\mathrm{wt}(u^*)=\mathrm{wt}(v^*)=-1,
\]

Therefore  $\mathrm{wt}(u^*v^*)=-2$ and
$u^*\blacklozenge v^*\in \langle S\rangle .$
Thus $\langle S\rangle$ is closed under $\blacklozenge$.
\end{proof}

The lemma reduces the main task to a membership problem: it remains to show that the $*$-images
appearing in $S$ indeed lie in $ST\langle X\rangle$.
Equivalently, we must verify that each $u^{*}$ with $\wt(u)=-2$ can be obtained from $X$ by
iterating the operation $\blacklozenge$.
This is achieved in the next theorem.

\begin{theorem}\label{th: S subseteq ST}
Let  $u\in B$ be a basis monomial of weight $\wt(u)=-2$, then the element $u^{*}$ in
$ST\langle X\rangle$. In particular, $S\subseteq ST\langle X\rangle$.
\end{theorem}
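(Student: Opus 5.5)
We prove that $u^{*}\in ST\langle X\rangle$ by induction on $n=\deg(u)$, together with an inner induction over the distribution of derivative orders among the factors of $u$. Write $u=x_{k_1}^{(i_1)}\cdots x_{k_n}^{(i_n)}$. By \eqref{eq:weight-formula}, $\wt(u)=-2$ means $\sum_j i_j=n-2$. Then $u$ has at least two factors of order $0$: if at most one factor had order $0$, the remaining $n-1$ factors would have orders $\ge1$, so $\sum_j i_j\ge n-1$. Also $n\ge2$, since a single factor $x^{(s)}$ has weight $s-1\ge-1$. By \eqref{eq u*=u_g*} we may permute the factors of $u$ freely before applying $*$, so only the multiset of factors matters.

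\emph{Base case.} For $n=2$ we have $u=xy$ with $x,y\in X$, and $u^{*}=xy'+yx'=x\blacklozenge y\in ST\langle X\rangle$.

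\emph{Inductive step.} Assume $n\ge3$ and the claim for all lengths $<n$. Choose a factor $z$ of $u$ of order $0$ and a factor $y^{(s)}$ with $s\ge1$; such a factor exists because $n-2>0$. Let $p$ be the monomial obtained from $u$ by deleting $z$ and replacing $y^{(s)}$ by $y^{(s-1)}$. Then $\deg p=n-1$ and $\wt(p)=\wt(u)+1-1=-2$, so $p^{*}\in ST\langle X\rangle$ by induction.

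As in the proof of Lemma~\ref{lem: S closed}, using \eqref{eq u**=u'*} (so that $(p^{*})'$ and $(p^{*})^{*}$ agree), we get
\[
p^{*}\blacklozenge z=p^{*}z'+z\,(p^{*})^{*}=(p^{*}z)^{*}\in ST\langle X\rangle .
\]
Expand $p^{*}z$ as $\sum_q q\,z$, where $q$ runs over the monomials obtained from $p$ by raising the order of one factor by one. The term in which $y^{(s-1)}$ is raised gives exactly $u$, after reordering by \eqref{eq u*=u_g*}. Every other term gives a monomial $u_t$ of the same length and weight, in which $y$ keeps order $s-1$ and some other factor $w^{(t)}$ of $p$ is raised to order $t+1$. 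Hence
\[
(p^{*}z)^{*}=c\,u^{*}+\sum_t u_t^{*},
\]
with $c\ge1$ the multiplicity. Since $\operatorname{char}k=0$, $u^{*}\in ST\langle X\rangle$ follows once every $u_t^{*}$ is known to lie in $ST\langle X\rangle$.

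\emph{Triangularity.} To close the argument we order monomials of fixed length $n$ and weight $-2$ by $\Phi(u)=\sum_j i_j^{2}$ and run an induction in this order. Passing from $u$ to $u_t$ changes $\Phi$ by
\[
(s-1)^2+(t+1)^2-s^2-t^2=2(t-s)+2 .
\]
The direction of the induction therefore depends on how $y^{(s)}$ is chosen. If $s$ is taken maximal, then $t\le s$ for every other factor, so $\Phi(u_t)\le\Phi(u)+2$; the difference equals $+2$ exactly when $t=s$. The base of this induction is the pattern in which one factor carries all $n-2$ derivatives. For that pattern, every $u_t$ has strictly smaller $\Phi$, so if we induct upward in $\Phi$, the base pattern is the \emph{last} one handled and the lower-$\Phi$ patterns must be settled first.

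\emph{Main obstacle.} The hard step is to find a choice of $(z,y^{(s)})$ that makes this system genuinely triangular. The difficulty comes from the ties, i.e.\ the terms $u_t$ with the same $\Phi$ as $u$, which occur for instance when $t=s-1$. I would first try choosing $y^{(s)}$ with $s$ minimal among the positive orders. Then every factor $w^{(t)}$ with $t\ge s$ strictly increases $\Phi$, and the only ties come from $s=1$ and $t=0$; these just exchange which zero-order variable is raised. For that tie case, the plan is to consider simultaneously all $p^{*}\blacklozenge z$ obtained by varying the choice of the order-$0$ factor $z$. This yields a linear system over the finitely many monomials with the same multiset of orders. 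The system is of the form $(J+D)v\in ST$, with $J$ all-ones-like and $D$ a positive diagonal, hence invertible in characteristic $0$. I expect verifying this invertibility, and checking that the downward induction in $\Phi$ starts correctly from the pattern $(0,\dots,0,1,\dots,1)$ of maximal spreading, to be the main technical work.
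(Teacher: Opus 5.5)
Your reduction $p^*\blacklozenge z=(p^*z)^*=c\,u^*+\sum_t u_t^*$ is correct. The plan breaks at the step you flagged, and the reason is not technical: the linear system is singular. No choice of $(z,y^{(s)})$ and no ordering by $\Phi$ can make it triangular.

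All your relations lie in $ST_{n-1}\blacklozenge X$, and this is already too small in multilinear degree $4$. For distinct $m,k\in\{1,2,3,4\}$ put $E_m=(x_m''\prod_{k\neq m}x_k)^*$ and $F_{mk}=(x_m'x_k'\prod_{l\neq m,k}x_l)^*$.
\begin{itemize}
\item These ten elements are linearly independent, because each monomial of $u^*$ determines $u$. They span the multilinear degree-$4$ part of $\langle S\rangle$.
\item By Lemma~\ref{lem: S closed}, the degree-$3$ elements of $ST\langle X\rangle$ multilinear in $x_m,x_k,x_l$ are combinations of the $(x_m'x_kx_l)^*$.
\item Hence every multilinear element of $ST_3\blacklozenge X$ is a combination of $((x_m'x_kx_l)^*x_z)^*=E_m+F_{mk}+F_{ml}$, where $\{m,k,l,z\}=\{1,2,3,4\}$.
\item The functional $\phi(E_m)=-2$, $\phi(F_{mk})=1$ vanishes on all of these combinations. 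Yet $\phi(u^*)\neq 0$ for every one of the ten targets.
\end{itemize}
So your scheme cannot produce even a single $u^*$ of degree $4$ in four distinct variables. For $n=3$ your $3\times 3$ system is invertible, so the failure starts at $n=4$.

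Your monotonicity bookkeeping also has a hole. With $s$ minimal and $s\ge 2$, raising an order-$0$ factor lowers $\Phi$ by $2s-2$, while raising a factor of order $\ge s$ raises it. So there is no consistent induction direction, even before the obstruction above.

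What is missing is the use of products of two composite elements. For example, $(x_1\blacklozenge x_2)\blacklozenge(x_3\blacklozenge x_4)=F_{13}+F_{14}+F_{23}+F_{24}$, on which $\phi$ equals $4$.

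The paper brings such products in through the identity $2(xyz')^*=((xz)^*y)^*+((yz)^*x)^*-((xy)^*z)^*$, applied with composite arguments.
\begin{itemize}
\item In Case 1 it treats $t=(x_ix_j)^*\in ST\langle X\rangle$ as a single factor of $tw_0$ and applies the induction hypothesis to $(tw_0)^*$.
\item In Case 2 it does the same with $v^*$.
\end{itemize}
In effect, the induction runs over products of weight-$(-1)$ elements of $ST\langle X\rangle$ and their derivatives, not only over basis monomials of $\Der\Perm\langle X\rangle$. Unwinding it for $u=x_1'x_2'x_3x_4$ produces exactly the term $((x_4x_2)^*(x_1x_3)^*)^*=(x_4\blacklozenge x_2)\blacklozenge(x_1\blacklozenge x_3)$ that your relations cannot reach.

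To repair your argument, enlarge the family of relations in this way. Once you do, the three-term identity gives the reduction directly, and the inner induction on $\Phi$ becomes unnecessary.
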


\begin{proof}
We will prove it by induction on $n=\deg(u)$. If $n=2$, then $u=xy$ for some $x,y\in X$, and by the definition of $^{*}$ we have
\[
u^{*}=(xy)^{*}=xy'+yx'=x\blacklozenge y\in ST\langle X\rangle.
\]
If $n=3$ and $\wt(u)=-2$, then $u$ has exactly one first derivative and no higher derivatives;
thus $u=xyz'$ up to permuting all variables (allowed by the Proposition \ref{eq u*=u_g*}).
In this case, the identity
\begin{equation}\label{eq:base-xyz-rewrite}
(xyz')^{*}
=\frac{1}{2}\Big( (x\blacklozenge z)\blacklozenge y+(y\blacklozenge z)\blacklozenge x-(x\blacklozenge y)\blacklozenge z\Big)
\end{equation}
shows directly that $(xyz')^{*}\in ST\langle X\rangle$.

Assume $n\ge 4$ and that $w^{*}\in ST\langle X\rangle$ holds for every basis monomial $w\in B$
with $\deg(w)<n$ and $\wt(w)=-2$. Let $w\in B$ satisfy $\deg(w)=n$ and $\wt(w)=-2$.
We distinguish two cases.

\textit{Case 1.}
Assume that a first derivative $x_i'$ occurs in $u$. Since $\wt(w)=-2$, by Proposition \ref{eq u*=u_g*} we can choose a factor $x_j\in X$
and  a monomial $w_0$ such that
\[
w=x_i' x_j w_0,\qquad \wt(w_0)=-1.
\]
Applying \eqref{eq:base-xyz-rewrite} with $(x,y,z)=(x_i,x_j,w_0)$ yields the reduction
\begin{equation}\label{eq:star-lemma31-rewrite}
\begin{aligned}
(x_i'x_j w_0)^{*}
&=\frac12\Big( (x_i\blacklozenge x_j)\blacklozenge w_0
+(x_i\blacklozenge w_0)\blacklozenge x_j
-(x_j\blacklozenge w_0)\blacklozenge x_i\Big)\\
&=\frac12\Big( \big((x_ix_j)^{*}w_0\big)^{*}
+\big((x_i w_0)^{*}x_j\big)^{*}
-\big((x_j w_0)^{*}x_i\big)^{*}\Big).
\end{aligned}
\end{equation}
We verify that each term on the right lies in $ST\langle X\rangle$.

First, $\deg(x_i w_0)=\deg(x_j w_0)=n-1<n$, and
\[
\wt(x_i w_0)=\wt(x_i)+\wt(w_0)=-1+(-1)=-2,
\]
\[
\wt(x_j w_0)=\wt(x_j)+\wt(w_0)=-1+(-1)=-2.
\]
Hence, by the induction hypothesis,
\[
(x_i w_0)^{*},\ (x_j w_0)^{*}\in ST\langle X\rangle.
\]
Next, $(x_ix_j)^{*}=x_i\blacklozenge x_j\in ST\langle X\rangle$, and $\wt\big((x_ix_j)^{*}\big)=-1$.
Set $t:=(x_ix_j)^{*}$. Then $\deg(tw_0)=n-1<n$ and
\[
\wt(tw_0)=\wt(t)+\wt(w_0)=-1+(-1)=-2.
\]
Therefore, by induction again,
\[
(tu_0)^{*}=\big((x_ix_j)^{*}u_0\big)^{*}\in ST\langle X\rangle.
\]
Finally, since $ST\langle X\rangle$ is closed under $\blacklozenge$, the expressions
$\big((x_i u_0)^{*}x_j\big)^{*}$ and $\big((x_j u_0)^{*}x_i\big)^{*}$ also belong to
$ST\langle X\rangle$.
Thus all three summands in \eqref{eq:star-lemma31-rewrite} lie in $ST\langle X\rangle$, and hence
$u^{*}\in ST\langle X\rangle$ in Case~1.

\textit{Case 2.}
Assume that $u$ has no first derivative factors of the form $x_\alpha'$. Since $\wt(u)=-2$, this means that
at least one factor has derivation order $\ge 2$. By Proposition~\ref{eq u*=u_g*} we may write
\[
u=u_1u_2,\qquad
u_1=x_i^{(k)}x_{j_1}\cdots x_{j_k}\quad\text{with }k>1,
\]
where $u_2$ is a differential monomial.

Define
\[
v:=x_i^{(k-1)}x_{j_1}\cdots x_{j_k}.
\]
Then expanding $v^{*}$ gives
\[
v^{*}=u_1+\sum_{\ell=1}^{k}x_i^{(k-1)}x_{j_1}\cdots x_{j_\ell}'\cdots x_{j_k}.
\]
By Proposition~\ref{eq u*=u_g*} we obtain
\begin{equation}\label{eq:case2-star-rewrite}
u^{*}=(u_1u_2)^{*}
=\bigl(v^{*}u_2\bigr)^{*}
-\sum_{\ell=1}^{k}\Bigl(x_i^{(k-1)}x_{j_1}\cdots x_{j_\ell}'\cdots x_{j_k}\,u_2\Bigr)^{*}.
\end{equation}
 
We consider each part separately.

\textit{The first term.}
We claim that $\bigl(v^{*}u_2\bigr)^{*}\in ST\langle X\rangle$ by induction.
Indeed, $\wt(u_1)=\wt(x_i^{(k)})+\sum_{r=1}^k\wt(x_{j_r})=(-1+k)-k=-1$, so
\[
\wt(u_2)=\wt(u)-\wt(u_1)=-1.
\]
Moreover, every monomial appearing in $v^{*}$ has the same degree as $u_1$ but has strictly
smaller ``derivation complexity'' than $u_1$ (one lowers $k$ by $1$, and the remaining summands
introduce a first derivative); in particular,
\[
\wt(v^{*}u_2)=\wt(v^{*})+\wt(u_2)=(-1)+(-1)=-2.
\]
Therefore, the induction hypothesis applies and yields $\bigl(v^{*}u_2\bigr)^{*}\in ST\langle X\rangle$.

\textit{The sum.} Each monomial
\[
x_i^{(k-1)}x_{j_1}\cdots x_{j_\ell}'\cdots x_{j_s}\,u_2
\]
contains a first derivative factor $x_{j_\ell}'$ and it is covered by Case 1. Consequently, each
summand in the finite sum in \eqref{eq:case2-star-rewrite} lies in $ST\langle X\rangle$.

\smallskip
Combining the two parts in \eqref{eq:case2-star-rewrite}, we conclude that $u^{*}\in ST\langle X\rangle$
also in Case~2. This finishes the induction and completes the proof.
\end{proof}

Denote by $ST_n$ the homogeneous component of $ST(X)$ of degree $n$, and by $ST^{\text{mult}}_n$ its restriction to the multilinear part in $x_1, \ldots, x_n$. 

Combining Lemma \ref{lem: S closed} and Theorem \ref{th: S subseteq ST} with the grading introduced above, we can describe the multilinear components explicitly.

\begin{corollary} The set $S$ is a basis of the
space of symmetric elements $ST(X).$
The dimension of $ST^{\text{mult}}_n$ is
$$dim(ST^{\text{mult}}_n)=\binom{2n-3}{n-1},$$
where $n\geq 2$.
\end{corollary}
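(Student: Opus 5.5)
The plan is to split the corollary into three parts: the spanning statement, linear independence, and the count.

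\emph{Spanning.} Since $X\subseteq S$, Lemma~\ref{lem: S closed} gives that $\langle S\rangle$ is a $\blacklozenge$-closed subspace containing $X$. By minimality of $ST\langle X\rangle$, this yields $ST\langle X\rangle\subseteq\langle S\rangle$. Conversely, Theorem~\ref{th: S subseteq ST} gives $S\subseteq ST\langle X\rangle$. Hence $ST\langle X\rangle=\langle S\rangle$, and it remains to extract a basis from $S$.

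\emph{Reading ``basis'' correctly.} By \eqref{eq u*=u_g*}, $u^*$ depends only on the multiset of factors $x_k^{(i)}$ of $u$. So the correct statement is that the elements $u^*$, with $u$ running over one representative of each \emph{commutative} differential monomial of weight $-2$, together with $X$, form a basis. To prove independence I would pass to the quotient $\pi\colon\Der\Perm\langle X\rangle\to k[X^{(\omega)}]$, the free commutative differential algebra, obtained by imposing commutativity. This quotient is compatible with $d$. Since $*$ is defined by a Leibniz rule with $x^{*}=x'$, an induction on degree gives
\[
\pi(u^*)=d(\pi(u)).
\]
The images $\pi(u)$ of pairwise distinct multisets are distinct monomials, hence linearly independent. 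In characteristic $0$ the derivation $d$ on $k[X^{(\omega)}]$ has kernel equal to the constants. Therefore $d$ is injective on the span of monomials of positive degree, so the elements $\pi(u^*)$ are linearly independent, and hence so are the $u^*$. Elements of $X$ are independent from these because they have weight $-1$ and degree $1$, whereas every $u^*$ has degree at least $2$.

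\emph{Counting.} A multilinear monomial $u$ of degree $n\ge 2$ in $x_1,\dots,x_n$ with $\wt(u)=-2$ satisfies $\sum_j i_j=n-2$ by \eqref{eq:weight-formula}. Up to permutation of factors, $u$ is therefore determined by an assignment of nonnegative orders $i_1,\dots,i_n$ to $x_1,\dots,x_n$ summing to $n-2$. The number of such weak compositions is
\[
\binom{(n-2)+(n-1)}{n-1}=\binom{2n-3}{n-1}.
\]

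The main obstacle I expect is justifying that $d$ is injective on nonconstant elements of $k[X^{(\omega)}]$. If this is not taken as standard, I would prove it via a leading-term argument, which suffices here because we work with homogeneous multilinear components. Order monomials lexicographically by the derivative orders of $x_1$, then $x_2$, and so on. For a multilinear monomial $m$, the leading monomial of $d(m)$ is obtained by raising the order of $x_1$ by one, and it occurs with coefficient $1$. Distinct $m$ have distinct leading monomials under this rule, which gives the injectivity needed.
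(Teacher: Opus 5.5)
Your proposal is correct and follows the paper's route: you get $ST\langle X\rangle=\langle S\rangle$ from Lemma~\ref{lem: S closed} and Theorem~\ref{th: S subseteq ST}, and the count from the weight formula \eqref{eq:weight-formula} as weak compositions of $n-2$ into $n$ parts. The paper never justifies the linear independence of $S$; its appeal to the grading cannot separate distinct $u^*$ of the same weight and multidegree. Your commutative-projection argument, $\pi(u^*)=d(\pi(u))$ together with $\ker d=k$ in characteristic $0$, supplies this missing step and also makes precise that $S$ must be read as a set, since $u^*$ depends only on the multiset of factors.
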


\section{\texorpdfstring{Weak-symmetric Elements and the $\bullet$-Product in $\Der\Perm\langle X\rangle$}{Weak-symmetric Elements and the bullet-product in DerPerm<X>}}

In this section, we study the subspace $\overline{ST}\langle X\rangle\subseteq \Der\Perm\langle X\rangle$
generated from $X$ by the operation $a\bullet b=a'b+ab'$.

\begin{Proposition}\label{prop:bullet-diidentities}
Let $\Der\Perm\langle X\rangle$ be the free differential perm algebra with derivation $d$, and endow it
with the bilinear product
\[
a\bullet b:=a'b+ab'.
\]
Then $\bigl(\Der\Perm\langle X\rangle,\bullet\bigr)$ satisfies the left-commutative identity
\begin{equation}\label{eq:left-comm-bullet}
(a\bullet b)\bullet c=(b\bullet a)\bullet c,
\end{equation}
the following two Tortken di-identities:
\begin{equation}\label{eq:tortken-di-1}
\begin{aligned}
&(a\bullet b)\bullet (c\bullet d)-(c\bullet b)\bullet(a\bullet d)
+\Bigl((a,b,c)_{\bullet}\Bigr)\bullet d
+b\bullet\Bigl(a\bullet(c\bullet d)-c\bullet(a\bullet d)\Bigr)=0,
\end{aligned}
\end{equation}
\begin{equation}\label{eq:tortken-di-2}
\begin{aligned}
&(a\bullet b)\bullet (c\bullet d)-(a\bullet c)\bullet (b\bullet d)
-b\bullet\Bigl((a,c,d)_{\bullet}\Bigr)
+c\bullet\Bigl((a,b,d)_{\bullet}\Bigr)=0,
\end{aligned}
\end{equation}
 where $(x,y,z)_{\bullet}=(x\bullet y)\bullet z-x\bullet(y\bullet z).$
\end{Proposition}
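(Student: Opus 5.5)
The starting observation is that $a\bullet b=a'b+ab'=d(ab)$ in any differential perm algebra, by the Leibniz rule. Hence every $\bullet$-monomial is $d$ applied to a product built recursively from ordinary products and $d$. The perm law $(uv)w=(vu)w$ holds for arbitrary elements $u,v,w$. For left-commutativity \eqref{eq:left-comm-bullet} this gives
\[
(a\bullet b)\bullet c=d\bigl(d(ab)\,c\bigr)=d\bigl((a'b+ab')c\bigr)=d\bigl((ba'+b'a)c\bigr)=d\bigl(d(ba)\,c\bigr)=(b\bullet a)\bullet c .
\]
So the first identity needs no computation beyond the perm law.

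For the two Tortken di-identities \eqref{eq:tortken-di-1} and \eqref{eq:tortken-di-2}, I would reduce to the commutative case. Both identities are multilinear in $a,b,c,d$, so it suffices to check them on the generators in $\Der\Perm\langle X\rangle$.

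Next I would track the last factor of each term in the normal form \eqref{eq:derperm-basis}. Since $u\bullet w=d(uw)=u'w+uw'$, the rightmost factor of every monomial in $u\bullet w$ is a derivative of a letter from the rightmost part of $w$. Inspecting each summand, the rightmost variable in every term of both identities is $d$:
\begin{itemize}
\item in \eqref{eq:tortken-di-1}, the terms are $(\cdot)\bullet(c\bullet d)$, $(\cdot)\bullet(a\bullet d)$, $(\cdot)\bullet d$, and $b\bullet(\cdots d)$;
\item in \eqref{eq:tortken-di-2}, the associators $(a,c,d)_\bullet$ and $(a,b,d)_\bullet$ end in $d$, and $(\cdot)\bullet(b\bullet d)$, $(\cdot)\bullet(c\bullet d)$ do as well.
\end{itemize}
I would verify this claim carefully term by term. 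It is the point where one must make sure that no term has a different letter in the last position.

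Given that claim, every monomial in the expansion has the form $a^{(i)}b^{(j)}c^{(k)}d^{(l)}$ up to permuting the first three factors. By the perm law, the first three factors commute freely. Hence the multilinear component with $d$ in the last position maps injectively, in fact isomorphically, onto the multilinear part of the free commutative differential algebra. This map sends a monomial to the same monomial read commutatively, and it is compatible with $d$ and with products whose last factor contains $d$.

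Therefore it suffices to verify both identities in a commutative associative differential algebra with $a\bullet b=(ab)'$. There the operation $\bullet$ coincides with the product $\ast$ of \eqref{eq:symmdiff}, so it is commutative. Using commutativity, I would rewrite \eqref{eq:tortken-di-1} and \eqref{eq:tortken-di-2} in terms of $\ast$. The expected outcome is that each becomes, after relabelling, a linear combination of instances of the Tortken identity \eqref{eq:tortken1}, which is known to hold for $\ast$ \cite{Dzhumadil2002,Dzhumadil2005}. For instance, $b\bullet(a\bullet(c\bullet d)-c\bullet(a\bullet d))$ should combine with the remaining terms to give associator terms of \eqref{eq:tortken1}.

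The main obstacle is this last matching. If it does not close up cleanly, I would fall back on a direct expansion in the commutative differential algebra. Each term has weight $-1$ on the free commutative side, so it is a combination of the finitely many monomials $a^{(i)}b^{(j)}c^{(k)}d^{(l)}$ with $i+j+k+l=3$, with extra structure from the outer $d$. Comparing coefficients of these monomials, organized by the derivative pattern $(i,j,k,l)$, is routine.
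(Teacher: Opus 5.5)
Your proof is correct, but it takes a different route from the paper. The paper only asserts that all three identities follow by a straightforward direct expansion in $\Der\Perm\langle X\rangle$. You instead transfer the problem to the commutative case, and each ingredient checks out. First, $a\bullet b=d(ab)$. Second, every monomial of $u\bullet w$ has the same rightmost letter as $w$, so every term of \eqref{eq:tortken-di-1} and \eqref{eq:tortken-di-2} ends in a derivative of the variable $d$. Third, the canonical surjection $\pi$ from $\Der\Perm\langle X\rangle$ onto the free commutative differential algebra is injective on the span of the multilinear monomials $a^{(i)}b^{(j)}c^{(k)}d^{(l)}$ with $d$ last.

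Two points should be tightened. The reduction to distinct free generators comes from freeness, not from multilinearity: substitution homomorphisms commute with the product and with $d$, hence with $\bullet$. And $\pi$ should be used as a globally defined homomorphism of differential algebras, so that $\pi(u\bullet v)=\pi(u)\ast\pi(v)$ for all $u,v$; then no special ``compatibility with products whose last factor contains $d$'' is needed.

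The step you flag as the main obstacle closes immediately. By commutativity of $\ast$, $(c\ast b)\ast(a\ast d)=(a\ast d)\ast(c\ast b)$ and $a\ast(c\ast d)-c\ast(a\ast d)=a\ast(d\ast c)-(a\ast d)\ast c=-\langle a,d,c\rangle_\ast$. So $\pi$ sends \eqref{eq:tortken-di-1} literally to \eqref{eq:tortken1}, and \eqref{eq:tortken-di-2} to \eqref{eq:tortken1} with $c$ and $d$ interchanged. If you prefer not to rely on the citation, note that $\langle a,b,c\rangle_\ast=\bigl(b(a'c-ac')\bigr)'$ in the commutative case; after that, both identities reduce to a few lines of cancellation.

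What your approach buys is an explanation rather than a verification. It shows that every multilinear identity of $\ast$ on commutative differential algebras whose terms all share the same rightmost variable holds for $\bullet$ on $\Der\Perm\langle X\rangle$. This accounts at once for the left-commutative law and for the precise shape of the two di-identities. The paper's direct computation is self-contained but, as written, gives neither details nor this structural reason.
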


\begin{Proof}
All identities
follow by a straightforward direct computation.
\end{Proof}

Following the logic of the previous section, we introduce the set of 
``weak-symmetric generators'' for $\overline{ST}\langle X\rangle$. 
Namely, let
\[
\overline{S} := X \cup \{\, u' \mid u\in B,\ \mathrm{wt}(u) = -2 \}.
\]
We refer to the elements of the linear span $\langle \overline{S} \rangle$ as 
\textit{weak-symmetric elements}.
The main difference from the $*$--case is that $u'$ is \textit{not} invariant under permuting factors:
in general 
\[
(x_1\cdots x_n)'\neq (x_1\cdots x_{n-2}x_nx_{n-1})'. 
\]

The following lemma is the analogue of Lemma \ref{lem: S closed} and shows that this family of elements is closed under the product $\bullet$.

\begin{lemma}\label{lem: {S} closed}
The linear span $\langle \overline{S} \rangle$ is closed under the product 
$a \bullet b = a'b + ab'$.
\end{lemma}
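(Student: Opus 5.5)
The plan is to reduce closure to one observation: the product $\bullet$ is the derivative of the ordinary product. Since $d$ is a derivation,
\[
a\bullet b=a'b+ab'=d(ab)=(ab)'
\]
for all $a,b\in\Der\Perm\langle X\rangle$. So it suffices to show that for generators $a,b\in\overline S$ the product $ab$ is a linear combination of basis monomials of weight $-2$. Applying $d$ then gives a linear combination of elements $v'$ with $v\in B$ and $\wt(v)=-2$, which lies in $\langle\overline S\rangle$. Bilinearity of $\bullet$ reduces the claim to pairs of generators, exactly as in Lemma \ref{lem: S closed}.

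First I would record two weight facts.
\begin{enumerate}
\item[(i)] Every basis monomial of weight $w$ has a derivative that is homogeneous of weight $w+1$. This follows from the rule $\wt(d(u))=\wt(u)+1$, or directly from the Leibniz expansion together with \eqref{eq:weight-formula}: each summand of $u'$ raises exactly one derivative order by one. In particular, if $\wt(u)=-2$ then $u'$ is a combination of basis monomials of weight $-1$.
\item[(ii)] The product of two basis monomials is, after rewriting to the normal form \eqref{eq:derperm-basis}, a single basis monomial, and its weight is the sum of the two weights. Rewriting uses only associativity and the left-commutative law \eqref{perm id}. These only reorder the letters $x^{(s)}$ that are not in last position and never change the multiset of letters. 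By \eqref{eq:weight-formula} the weight depends only on that multiset.
\end{enumerate}

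Next I would check the three cases of generators.
\begin{enumerate}
\item[(a)] If $a=x$ and $b=y$ lie in $X$, then $xy\in B$ with $\wt(xy)=-2$, so $x\bullet y=(xy)'\in\overline S$.
\item[(b)] If $a=u'$ with $\wt(u)=-2$ and $b=x\in X$, then by (i) $u'$ is a combination of monomials of weight $-1$. By (ii), $u'x$ is a combination of basis monomials of weight $-1+(-1)=-2$. Hence $u'\bullet x=(u'x)'\in\langle\overline S\rangle$. The symmetric case $x\bullet u'=(xu')'$ is identical. This matters because $\bullet$ is not commutative, so both orders must be treated.
\item[(c)] If $a=u'$ and $b=v'$ with $\wt(u)=\wt(v)=-2$, then $u'v'$ is a combination of basis monomials of weight $-2$, and $u'\bullet v'=(u'v')'\in\langle\overline S\rangle$.
\end{enumerate}

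The only point needing care is the bookkeeping in (ii). Unlike the $*$-case, $u'$ is not invariant under permuting factors, so one might worry that the normal-form rewriting interacts badly with $d$. It does not: we rewrite $ab$ to normal form first and apply $d$ only afterwards. Each resulting basis monomial $v$ has weight $-2$, so $v'$ is literally an element of $\overline S$, with no symmetry required. I expect no genuine obstacle beyond stating this cleanly.
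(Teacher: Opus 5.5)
Your proof is correct and follows the paper's argument. Both reduce to pairs of generators, use $a\bullet b=(ab)'$, and check that $ab$ has weight $-2$. Your additions are tidyings of that same proof: you treat the order $x\bullet u'$ explicitly, which the paper leaves implicit even though $\bullet$ is not commutative, and you note that $u'$ is only a linear combination of weight $-1$ basis monomials.
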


\begin{proof}
Since the product $\bullet$ is bilinear, it suffices to verify closure for the generators of $\overline{S}$. By definition, any element in $\overline{S}$ is either an element $x \in X$ or a derivative $u'$ for some $u \in B$ with $\mathrm{wt}(u) = -2$.

Let $x, y \in X$. Then
\[
x \bullet y = x'y + xy' = (xy)'.
\]
Since $\mathrm{wt}(xy) = \mathrm{wt}(x) + \mathrm{wt}(y) = -1 - 1 = -2$, the product $(xy)'$ belongs to $\overline{S}$ by definition. Thus, $x \bullet y \in \langle \overline{S} \rangle$.

Let $u' \in \overline{S}$ with $\mathrm{wt}(u) = -2$, and let $x \in X$. Then
\[
u' \bullet x = (u')'x + u'x' = (u'x)'.
\]
We note that $u' \in \Der\Perm\langle X\rangle$ has weight $-1$ (since the derivative increases weight by $+1$ and $\mathrm{wt}(u) = -2$). Thus,
\[
\mathrm{wt}(u'x) = \mathrm{wt}(u') + \mathrm{wt}(x) = -1 + (-1) = -2.
\]
By the definition of our generating set, $(u'x)' \in \langle \overline{S} \rangle$. Hence, $u' \bullet x \in \langle \overline{S} \rangle$.

Let $u', v' \in \overline{S}$ with $\mathrm{wt}(u) = \mathrm{wt}(v) = -2$. Then
\[
u' \bullet v' = (u')'v' + u'(v')' = (u'v')'.
\]
As established previously, $\mathrm{wt}(u') = -1$ and $\mathrm{wt}(v') = -1$. Therefore,
\[
\mathrm{wt}(u'v') = \mathrm{wt}(u') + \mathrm{wt}(v') = -1 - 1 = -2.
\]
It follows that $(u'v')' \in \overline{S}$, and consequently $u' \bullet v' \in \langle \overline{S} \rangle$.

\medskip
\noindent
Since all combinations of generators result in elements within $\overline{S}$, the linear span $\langle \overline{S} \rangle$ is closed under the product $\bullet$.
\end{proof}


By an argument parallel to Theorem \ref{th: S subseteq ST}, we obtain:

\begin{theorem}\label{th: {S} subseteq {ST}}
Let $u\in B$ be a basis monomial with $\wt(u)=-2$. Then $u'\in \overline{ST}\langle X\rangle$.
In particular, $\overline{S}\subseteq \overline{ST}\langle X\rangle$.
\end{theorem}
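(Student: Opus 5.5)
The plan is to mirror the proof of Theorem~\ref{th: S subseteq ST}, replacing the operator $*$ by the derivation $d$. The key observation is
\[
a\bullet b=a'b+ab'=(ab)'.
\]
So if $p,q\in\overline{ST}\langle X\rangle$, then $(pq)'\in\overline{ST}\langle X\rangle$. In particular, for $x,y\in X$ and $w,v\in B$ of weight $-2$ with $w',v'\in\overline{ST}\langle X\rangle$, the elements
\[
(xy)',\qquad (w'x)',\qquad (xw')',\qquad (w'v')'
\]
all lie in $\overline{ST}\langle X\rangle$. This is the exact analogue of the computations in Lemma~\ref{lem: {S} closed}.

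We first need a replacement for \eqref{eq u*=u_g*}, since $u'$ is not invariant under all permutations. Because $d$ is a derivation and the perm identity lets the first $n-1$ factors of a monomial commute, every summand of $(x_1^{(i_1)}\cdots x_n^{(i_n)})'$ keeps its last factor in the last position. Hence $u'$ is invariant under permutations of the first $n-1$ factors. This fixes the combinatorial bookkeeping: a monomial of weight $-2$ is a commutative word $u_0$ followed by a distinguished last letter $x_k^{(i)}$.

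We then argue by induction on $n=\deg(u)$, as in Theorem~\ref{th: S subseteq ST}. For $n=2$ we have $(xy)'=x\bullet y$. For $n=3$ the weight condition forces exactly one first derivative, giving three shapes up to the allowed permutations:
\[
(xyz')',\qquad (xy'z)',\qquad (x'yz)'.
\]
We write down the twelve products
\[
(a\bullet b)\bullet c=((ab)'c)',\qquad a\bullet(b\bullet c)=(a(bc)')',
\]
for $\{a,b,c\}=\{x,y,z\}$, expand each in the basis \eqref{eq:derperm-basis} using \eqref{perm id}, and solve a small linear system. The goal is a formula analogous to \eqref{eq:base-xyz-rewrite} expressing each of the three shapes through iterated $\bullet$-products. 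If the system is underdetermined, the Tortken di-identities of Proposition~\ref{prop:bullet-diidentities} indicate which relations hold among these products.

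For the inductive step with $n\ge 4$, we split into cases according to where the derivatives sit, as in Theorem~\ref{th: S subseteq ST}.
\begin{enumerate}
\item[(a)] \emph{A first derivative $x_i'$ occurs among the first $n-1$ factors.} Write $u=x_i'x_jw_0$ with $\wt(w_0)=-1$, where $w_0$ keeps the original last letter. Substitute $(x,y,z)=(x_i,x_j,w_0)$ into the degree-$3$ formula. Each resulting term has the form $((x_ix_j)'w_0)'$, $((x_iw_0)'x_j)'$, and so on, and is covered by the induction hypothesis together with the observation $a\bullet b=(ab)'$.
\item[(b)] \emph{All derivatives among the first $n-1$ factors have order $\ge 2$.} Lower one order-$k$ derivative exactly as in Case~2 of Theorem~\ref{th: S subseteq ST}. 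Write $u_1=x_i^{(k)}x_{j_1}\cdots x_{j_k}$ and $v=x_i^{(k-1)}x_{j_1}\cdots x_{j_k}$, and expand $(v'u_2)'$. The correction terms contain a first derivative in a non-last position, so they fall under case (a).
\item[(c)] \emph{The only derivative sits on the last letter.} In this case $u=x_{j_1}\cdots x_{j_{n-1}}x_k^{(i)}$. We treat it through right multiplication: $x\bullet w'=(xw')'$, combined with the expansion $(xw')'=x'w'+xw''$, where the first term is handled by case (a).
\end{enumerate}

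I expect the main obstacle to be the degree-$3$ base identity and case (c). The loss of full permutation invariance means the single neat formula \eqref{eq:base-xyz-rewrite} has to be replaced by one formula per position of the derivative. One must also check that substituting the nonatomic element $w_0$ into these formulas is legitimate. This requires the identities to be valid in the free differential perm algebra for arbitrary elements, and $w_0$ must be allowed to occupy the distinguished last slot. I would first try to derive the three base formulas by direct computation. After that, I would verify that each formula holds identically, so that it can be applied with $z=w_0$ carrying the last letter.
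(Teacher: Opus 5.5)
\textbf{There is a genuine gap in case (a), and it is the load-bearing case.} Your outline follows the paper's own proof closely: degree-$3$ formulas, substituting a longer monomial $w_0$ into the third slot, and lowering higher derivatives. Two parts of it are sound. The degree-$3$ step works: with $P=a'bc$, $Q=ab'c$, $R=abc'$ one gets $(a\bullet b)\bullet c=(P+Q)'$, $a\bullet(b\bullet c)=(Q+R)'$ and $b\bullet(a\bullet c)=(P+R)'$, which is an invertible system. Substituting non-atomic elements is also harmless, since $(a'bc)'=\tfrac12\bigl(((ab)'c)'+(b(ac)')'-(a(bc)')'\bigr)$ holds for arbitrary $a,b,c$.

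For $(a,b,c)=(x_i,x_j,w_0)$, the last two terms are $x_j\bullet(x_iw_0)'$ and $x_i\bullet(x_jw_0)'$, and these lie in $\overline{ST}\langle X\rangle$ by induction. The first term $((x_ix_j)'w_0)'$ is not covered. First, $(x_ix_j)'w_0$ has degree $n$, not $n-1$. Second, it equals $(x_i\bullet x_j)\bullet w_0$ only formally, because $w_0$ is an arbitrary monomial of weight $-1$ and need not lie in $\overline{ST}\langle X\rangle$; already $x_3'x_4\notin\overline{ST}_2=\mathrm{span}\{(x_3x_4)',(x_4x_3)'\}$. Expanding gives $((x_ix_j)'w_0)'=(x_i'x_jw_0)'+(x_j'x_iw_0)'$. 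So the formula only yields the relation $(x_i'x_jw_0)'\equiv(x_j'x_iw_0)'$ modulo $\overline{ST}\langle X\rangle$, not a reduction. The main term $(v'u_2)'$ in (b) has the same defect: $u_2\notin\overline{ST}\langle X\rangle$ and $\deg(v'u_2)=n$. The paper's write-up treats this term the same way. It relies on the count $\deg(tw_0)=n-1$ in the proof of Theorem~\ref{th: S subseteq ST}, whereas in fact $\deg(tw_0)=n$. So mirroring the paper does not close the gap.

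\textbf{The gap cannot be fixed by rearranging the induction.} No argument in which every $\bullet$-product has a generator as one factor can succeed. Take $n=4$ and consider weight $-2$ monomials with last letter $x_4$. Put $\phi=1$ on those where two different letters carry a first derivative, and $\phi=-2$ on those where one letter carries a second derivative.
\begin{enumerate}
\item[(i)] Each weight $-2$ monomial $w$ of degree $3$ carries exactly one first derivative. Hence $\phi(xw')=\phi(w'x_4)=-2+1+1=0$ whenever these products have last letter $x_4$.
\item[(ii)] By Lemma~\ref{lem: {S} closed}, $\overline{ST}_3$ is spanned by such $w'$. So $\phi$ vanishes on the last-letter-$x_4$ part of $X\cdot\overline{ST}_3+\overline{ST}_3\cdot X$.
\item[(iii)] On the other hand, $\phi(x_1'x_2x_3'x_4)=1$.
\end{enumerate}
Since $d$ is injective, $(x_1'x_2x_3'x_4)'\notin X\bullet\overline{ST}_3+\overline{ST}_3\bullet X$, and this monomial is exactly of type (a).

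\textbf{What is missing} is a spanning argument that genuinely uses the products $\overline{ST}_k\bullet\overline{ST}_{n-k}$ with $2\le k\le n-2$. Your list contains $(w'v')'$, but your case analysis never invokes it. For $n=4$ the argument then closes. Modulo the letter products, all $u'$ of the first kind are congruent to a single element $e$, and all $u'$ of the second kind are congruent to $-2e$. Moreover $(x_1\bullet x_2)\bullet(x_3\bullet x_4)=((x_1x_2)'(x_3x_4)')'\equiv 4e$, which puts $e$, and hence everything, in $\overline{ST}_4$.

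For general $n$, one possible route is to fix the last letter. The canonical map to the free differential commutative algebra is then a linear bijection on multilinear weight $-2$ monomials, and it is multiplicative. This reduces the claim to the commutative statement that the $(ab)'$-closure of $X$ is the span of $X$ plus the derivatives of weight $-2$ elements (cf.~\cite{DzhumadilIsmailov2026Null}). That commutative statement still needs its own proof or a precise reference.
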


\begin{proof}

We argue by induction on $n=\deg(u)$.

If $\deg(u)=3$ and $\wt(u)=-2$, then $u$ contains exactly one first derivative (and no higher
derivatives). Using the perm identity $abc=bac$, we may reorder the two leftmost factors while
keeping the rightmost factor fixed. Hence $u$ is of one of the two forms
\[
u=x_1'x_2x_3\qquad\text{or}\qquad u=x_1x_2x_3'.
\]
By the given identities,
\begin{equation}\label{eq: di left exp}
(x_1'x_2x_3)'=\frac12\Big((x_1\bullet x_2)\bullet x_3+x_2\bullet(x_1\bullet x_3)-x_1\bullet(x_2\bullet x_3)\Big),
\end{equation}
and
\begin{equation}\label{eq: di right exp}
(x_1x_2x_3')'=\frac12\Big(x_2\bullet(x_1\bullet x_3)+x_1\bullet(x_2\bullet x_3)-(x_1\bullet x_2)\bullet x_3\Big),
\end{equation}
so in both cases $u'\in \overline{ST}\langle X\rangle$.

Let $\deg(u)=n\ge 4$ and $\wt(u)=-2$, and write $u=aw$ where $a$ is the leftmost factor and $w$
is the product of the remaining $n-1$ factors. We distinguish two cases.

Assume that a factor $x'$ occurs among the first $n-1$ factors of $u$, where $x\in X$.
Using the perm identity $abc=bac$ repeatedly, we may move this factor to the far left while
keeping the rightmost factor of $u$ fixed. Hence, we may write
\[
u=x_i' x_j u_0,
\]
where the rightmost factor of $u_0$ coincides with the rightmost factor of $u$.
Since $\wt(u)=-2$ and $\wt(x_i')=0$, $\wt(x_j)=-1$, it follows that $\wt(u_0)=-1$.

Applying \eqref{eq: di left exp} to the triple $(x_i',x_j,u_0)$, we obtain an explicit
decomposition of $(x_i' x_j u_0)'$ as a linear combination of $\bullet$--expressions:
\[
(x_i' x_j u_0)'=\frac12\Bigl((x_i\bullet x_j)\bullet u_0
+ x_j\bullet(x_i\bullet u_0) - x_i\bullet(x_j\bullet u_0)\Bigr).
\]
Each of the three terms on the right-hand side belongs to $\overline{ST}\langle X\rangle$ by the
same inductive reduction used in the proof of Theorem~\ref{th: S subseteq ST}
(applied to the corresponding shorter-weight $-2$ monomials occurring inside the $\bullet$--products).
Therefore $(x_i' x_j u_0)'\in \overline{ST}\langle X\rangle$, and hence $u'\in \overline{ST}\langle X\rangle$ in this case.

\smallskip
The remaining subcase, when the unique first derivative occurs in the rightmost position, is handled
analogously using \eqref{eq: di right exp}.

Now, suppose that $u$ has no factor of derivation order $1$.
Then some generator in $u$ has derivation order at least $2$. We treat two subcases, depending on
whether the derivative occurs in the rightmost position.

\medskip
\noindent\textit{Case a: a higher derivative occurs not in the rightmost position.}
This case is treated exactly as in Case~2 of Theorem~\ref{th: S subseteq ST}: one reduces the
maximal derivation order inside the left part of $u$ and expresses $u'$ via $\bullet$ using the
induction hypothesis. We omit it.

\medskip
\noindent\textit{Case b: a higher derivative occurs in the rightmost position.}
Assume that no factor of the form $x'$ occurs in $u$ and that the rightmost factor of $u$ has
derivation order at least $2$. Then we can write
\[
u=u_1u_2,\qquad u_2=x_{j_1}\cdots x_{j_k}\,x_i^{(r)} \quad (r\ge 2),
\]
where $u_1$ is a (possibly empty) differential monomial, and the rightmost factor of $u_2$
coincides with the rightmost factor of $u$.

Set
\[
v:=x_{j_1}\cdots x_{j_k}\,x_i^{(r-1)}.
\]
A direct expansion of $v'$ gives
\[
v'
=(x_{j_1}\cdots x_{j_k}x_i^{(r-1)})'
= x_{j_1}\cdots x_{j_k}x_i^{(r)}
+\sum_{\ell=1}^{k}x_{j_1}\cdots x_{j_\ell}'\cdots x_{j_k}x_i^{(r-1)}.
\]
Multiplying by $u_1$ on the left, we obtain
\[
u=u_1u_2=u_1(x_{j_1}\cdots x_{j_k}x_i^{(r)})
= u_1v' - \sum_{\ell=1}^{k} u_1\Bigl(x_{j_1}\cdots x_{j_\ell}'\cdots x_{j_k}x_i^{(r-1)}\Bigr).
\]
Differentiating this equality yields
\begin{equation}\label{eq:case2-rightmost-reduction}
u'
=(u_1v')'
-\sum_{\ell=1}^{k}\Bigl(u_1\,x_{j_1}\cdots x_{j_\ell}'\cdots x_{j_k}x_i^{(r-1)}\Bigr)'.
\end{equation}

We now analyse the two parts in \eqref{eq:case2-rightmost-reduction}.

\textit{The first term.}
The monomial $u_1v'$ has strictly smaller rightmost complexity than $u$, since the derivation order
of the rightmost factor has been reduced from $r$ to $r-1$. Hence $\wt(u_1v')=\wt(u)=-2$, the induction hypothesis implies $(u_1v')'\in \overline{ST}\langle X\rangle$.

\textit{The sum.}
Each monomial inside the sum in \eqref{eq:case2-rightmost-reduction} contains a first derivative
factor $x_{j_\ell}'$, hence it falls under the case that a factor $x'$ occurs among the first $n-1$ factors of $u$, where $x\in X$. Therefore, every summand belongs to
$\overline{ST}\langle X\rangle$.

Combining these two parts in \eqref{eq:case2-rightmost-reduction}, we conclude that
$u'\in\overline{ST}\langle X\rangle$.

\end{proof}

Denote by $\overline{ST}_n$ the homogeneous component of $\overline{ST}\langle X\rangle$ of degree $n$, and by $\overline{ST}^{\text{mult}}_n$ its restriction to the multilinear part in $x_1, \ldots, x_n$. 

Combining Lemma \ref{lem: {S} closed} and Theorem \ref{th: {S} subseteq {ST}} with the grading introduced above, we can describe the multilinear components explicitly.

\begin{corollary} The set $\overline{S}$ is a basis of the
space of symmetric elements $\overline{ST}\langle X\rangle.$
The dimension of $ST^{\text{mult}}_n$ is

$$\overline{ST}^{\text{mult}}_n=n\binom{2n-3}{n-1},$$
where $n\geq 2$.
\end{corollary}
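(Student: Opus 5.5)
The plan is to show first that $\overline{ST}\langle X\rangle=\langle \overline{S}\rangle$, then that $d$ is injective on the relevant weight $-2$ space, and finally to count that space in the multilinear case.

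\emph{Spanning.} By Lemma~\ref{lem: {S} closed}, $\langle\overline{S}\rangle$ contains $X$ and is closed under $\bullet$. Since $\overline{ST}\langle X\rangle$ is the smallest such subspace, $\overline{ST}\langle X\rangle\subseteq\langle\overline{S}\rangle$. Theorem~\ref{th: {S} subseteq {ST}} gives the reverse inclusion. Both sides are graded by degree, so for $n\ge 2$ the degree-$n$ multilinear component is
\[
\overline{ST}^{\text{mult}}_n=d\bigl(W_n\bigr),
\]
where $W_n$ is the span of the multilinear basis monomials $u\in B_n$ in $x_1,\dots,x_n$ with $\wt(u)=-2$. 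This uses that $d$ preserves degree and multilinearity.

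\emph{Injectivity of $d$ on $W_n$.} This is the step needed to pass from a spanning set to a basis, and it is the main obstacle. I would order the multilinear monomials of $B_n$ first by the derivation order of the rightmost factor, and then by an arbitrary fixed order on the remaining data. Here the remaining data are the identity of the rightmost variable and the derivation orders of the left variables, which commute by the perm law. For $u=x_{k_1}^{(i_1)}\cdots x_{k_n}^{(i_n)}$, the Leibniz expansion of $d(u)$ contains the monomial $x_{k_1}^{(i_1)}\cdots x_{k_n}^{(i_n+1)}$ with coefficient $1$. Every other summand has rightmost order $i_n$, so this monomial is the unique leading term. The map $u\mapsto x_{k_1}^{(i_1)}\cdots x_{k_n}^{(i_n+1)}$ is injective on basis monomials, so distinct basis monomials have distinct leading terms. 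A standard triangularity argument then shows that $\{u' : u\in B_n\cap W_n\}$ is linearly independent. Together with the spanning step, this shows that $\overline{S}$ (with $u$ running over basis monomials) is a basis. It also gives $\dim\overline{ST}^{\text{mult}}_n=\dim W_n$.

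\emph{Counting.} By \eqref{eq:weight-formula}, $u\in W_n$ means the derivation orders satisfy $i_1+\dots+i_n=n-2$. By \eqref{eq:derperm-basis}, a multilinear basis monomial is determined by two choices:
\begin{enumerate}
\item the rightmost variable, which can be chosen in $n$ ways, since the left factors are ordered canonically;
\item an assignment of nonnegative derivation orders to the $n$ variables summing to $n-2$.
\end{enumerate}
The number of such assignments is $\binom{(n-2)+(n-1)}{n-1}=\binom{2n-3}{n-1}$. Therefore
\[
\dim\overline{ST}^{\text{mult}}_n=n\binom{2n-3}{n-1}.
\]
The condition $n\ge 2$ is exactly what makes $n-2\ge 0$, so that $W_n\neq 0$.
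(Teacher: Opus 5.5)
Your proof is correct and takes the paper's route: the closure lemma for $\langle\overline{S}\rangle$ and the theorem $\overline{S}\subseteq\overline{ST}\langle X\rangle$ give $\overline{ST}\langle X\rangle=\langle\overline{S}\rangle$, and the dimension then comes from counting weight $-2$ multilinear basis monomials, giving $n\binom{2n-3}{n-1}$. The paper leaves the linear independence of the elements $u'$ implicit, and your leading-term argument supplies it: the term of $d(u)$ in which the rightmost factor is differentiated is the unique leading term. You state that argument only for multilinear monomials, but it applies verbatim to all basis monomials, which is what the full basis claim requires.
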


\section{Perm algebras and Leibniz brackets}

In this section, we show that every perm algebra endowed with the product $\circ$ gives rise to a Leibniz algebra. We then extend the construction to an arbitrary family of commuting derivations $D_1,\dots,D_n$ and prove that the resulting modified bracket still satisfies the left Leibniz identity. This provides a convenient way to build noncommutative Poisson-type structures starting from perm algebras.

Recall that a linear map $D:P\to P$ is called a $\delta$–derivation if
\[
D(xy)=\delta (D(x)y+xD(y))
\]
for all $x,y\in P$.

We begin by showing that derivations of perm algebras naturally generate Leibniz brackets.

\begin{Proposition}\label{prop:perm_leibniz}
Let $(P,\cdot)$ be a perm algebra and let $D$ be a $\delta$–derivation of $P$. 
Define a new multiplication
\[
a\circ b = D(a)b - aD(b).
\]
Then $(P,\circ)$ is a Leibniz algebra, that is,
\[
(a\circ b)\circ c = a\circ (b\circ c) - b\circ (a\circ c)
\]
for all $a,b,c\in P$.
\end{Proposition}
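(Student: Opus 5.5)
The plan is a direct computation. Expand both sides of the left Leibniz identity into monomials of length three, using only associativity, the $\delta$-derivation rule $D(xy)=\delta\bigl(D(x)y+xD(y)\bigr)$ and the left-commutative identity \eqref{perm id}. Then compare the results term by term. No induction or basis argument is needed, since the identity is trilinear and involves only products of three factors.

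\textbf{Step 1: expand the left-hand side.} First compute $D(a\circ b)=\delta\bigl(D^2(a)b+D(a)D(b)-D(a)D(b)-aD^2(b)\bigr)=\delta\bigl(D^2(a)b-aD^2(b)\bigr)$. The mixed terms cancel, and this cancellation is the key simplification. Hence
\[
(a\circ b)\circ c=\delta\,D^2(a)bc-\delta\,aD^2(b)c-D(a)bD(c)+aD(b)D(c).
\]

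\textbf{Step 2: expand the right-hand side.} In the same way,
\[
a\circ(b\circ c)=D(a)D(b)c-D(a)bD(c)-\delta\,aD^2(b)c+\delta\,abD^2(c),
\]
and $b\circ(a\circ c)$ is obtained by swapping $a$ and $b$.

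\textbf{Step 3: simplify the difference with the perm law.} In $a\circ(b\circ c)-b\circ(a\circ c)$, the left-commutative identity $xyz=yxz$ gives three reductions:
\begin{itemize}
\item $D(a)D(b)c=D(b)D(a)c$ and $abD^2(c)=baD^2(c)$, so these two pairs cancel;
\item $-\bigl(-D(b)aD(c)\bigr)=+aD(b)D(c)$;
\item $+\delta\,bD^2(a)c=\delta\,D^2(a)bc$.
\end{itemize}
What remains is $-D(a)bD(c)+aD(b)D(c)-\delta\,aD^2(b)c+\delta\,D^2(a)bc$. This is exactly the expression from Step 1, which proves the identity.

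\textbf{Main obstacle.} The only real point is the bookkeeping in Step 3: each reordering must swap only the first two factors of a triple product, keeping the last factor fixed. One must also check that no term needs right commutativity, which perm algebras do not satisfy. A useful check is that every term with $D(c)$ or $D^2(c)$ in the last position either cancels in pairs or matches a term on the left. The case $\delta=1$, which is the ordinary derivation case relevant for $\Der\Perm\langle X\rangle$, is included automatically.
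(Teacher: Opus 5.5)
Your proposal is correct and is essentially the paper's own proof. Like the paper, you use the $\delta$-derivation rule to get $D(a\circ b)=\delta\bigl(D^2(a)b-aD^2(b)\bigr)$, expand $a\circ(b\circ c)-b\circ(a\circ c)$ into eight triple products, and use left-commutativity (swapping only the first two factors) to reduce them to $\delta D^2(a)bc-\delta aD^2(b)c-D(a)bD(c)+aD(b)D(c)=(a\circ b)\circ c$.
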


\begin{Proof} Let $a,b,c \in P$. For convenience, we denote the derivation $D(x)$ by $x'$ for any $x \in P$. Using the definition of the product $a \circ b$ and the relation $(a'b - ab')' =\delta (a''b - ab'')$, we compute
$$\begin{array}{lcl}
a\circ(b\circ c)-b\circ(a\circ c)&=& a'b'c-a'bc'-\delta ab''c+ \delta abc''-b'a'c+b'ac'+\delta ba''c-\delta bac''\\
& = & \delta a''bc-\delta ab''c-a'bc'+ab'c'\\
& = & (a\circ b)\circ c.
\end{array}$$
Thus
\[
(a\circ b)\circ c
=
a\circ(b\circ c)-b\circ(a\circ c),
\]
which proves the Leibniz identity.
\end{Proof}

We extend the above construction to an arbitrary family of commuting derivations $D_1,\dots,D_n$ and show that the modified bracket still satisfies the Leibniz identity.

\begin{proposition}\label{prop:leibniz-new-bracket}
Let $(P,\cdot)$ be a perm algebra.
Let $D_1,\dots,D_n\in\Der(P)$ be commuting derivations, $[D_i,D_j]=0$.
Set
\[
W_{\text{perm}}:=\bigoplus_{i=1}^n P\,D_i,
\]
and define a bilinear bracket on $W$ by
\begin{equation}\label{eq:new-leibniz-bracket}
[aD_i,\;bD_j]_{\circ}:=D_j(a)\cdot b\,D_i- a\cdot D_i(b)\,D_j,
\qquad a,b\in P,\ i,j\in\{1,\dots,n\}.
\end{equation}
Then $(W_{\text{perm}},[\ ,\ ]_{\circ})$ is a (left) Leibniz algebra, i.e. for all $X,Y,Z\in W$,
\[
[[X,Y]_{\circ},Z]_{\circ}=[X,[Y,Z]_{\circ}]_{\circ}-[Y,[X,Z]_{\circ}]_{\circ}.
\]
\end{proposition}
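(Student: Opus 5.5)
The plan is to verify the identity by a direct computation on homogeneous elements, in the spirit of Proposition~\ref{prop:perm_leibniz}. Both sides of the identity are trilinear in $X,Y,Z$, and the bracket \eqref{eq:new-leibniz-bracket} is defined on the spanning elements $aD_i$. So it suffices to take
\[
X=aD_i,\qquad Y=bD_j,\qquad Z=cD_k,
\]
with $a,b,c\in P$ and $i,j,k\in\{1,\dots,n\}$ arbitrary. We may allow $i,j,k$ to coincide, since nothing in the computation uses distinctness.

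First I will compute the inner brackets directly from \eqref{eq:new-leibniz-bracket}:
\[
[X,Y]_\circ=D_j(a)b\,D_i-aD_i(b)\,D_j,\qquad [Y,Z]_\circ=D_k(b)c\,D_j-bD_j(c)\,D_k,
\]
\[
[X,Z]_\circ=D_k(a)c\,D_i-aD_i(c)\,D_k.
\]
Next I will apply the bracket a second time. Each inner bracket is a sum of two terms of the form $pD_l$ with $p$ a product of two elements, so the outer bracket produces expressions such as $D_k(D_j(a)b)$ and $D_i(bD_j(c))$. I will expand these with the Leibniz rule for the derivations $D_l$.

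After expansion, every term is a product of three elements of $P$ (carrying derivatives) times one of $D_i$, $D_j$, $D_k$. I will sort the terms of $[[X,Y]_\circ,Z]_\circ$, $[X,[Y,Z]_\circ]_\circ$ and $[Y,[X,Z]_\circ]_\circ$ into three groups according to which of $D_i,D_j,D_k$ they multiply. Within each group I will compare coefficients using two facts:
\begin{itemize}
\item the perm identity $(xy)z=(yx)z$, which lets me swap the first two factors of any triple product, for example $D_i(b)D_k(a)c=D_k(a)D_i(b)c$;
\item the commutation $D_kD_j=D_jD_k$ and $D_kD_i=D_iD_k$, which identifies second-derivative terms such as $D_kD_j(a)$ coming from different sides.
\end{itemize}

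As a sample, the $D_i$-group of the left-hand side is $D_kD_j(a)bc+D_j(a)D_k(b)c$. On the right-hand side, $[X,[Y,Z]_\circ]_\circ$ contributes $D_j(a)D_k(b)c-D_k(a)bD_j(c)$. The term $-[Y,[X,Z]_\circ]_\circ$ contributes $+bD_j(D_k(a)c)=bD_jD_k(a)c+bD_k(a)D_j(c)$. The terms $bD_k(a)D_j(c)$ and $D_k(a)bD_j(c)$ cancel by the perm identity. The term $bD_jD_k(a)c$ matches $D_kD_j(a)bc$ after swapping the first two factors and commuting $D_jD_k$. The $D_j$- and $D_k$-groups will be handled the same way.

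The main obstacle is this bookkeeping. There are about six terms per group, and each cancellation must only swap the first two factors of a triple product, never the last one. If some term refuses to cancel, I would first recheck the orientation of the bracket, namely $D_j(a)b\,D_i$ versus $a\,D_i(b)\,D_j$. A useful sanity check is the case $n=1$, where the bracket reduces to $(a\circ b)D$ and the result should follow from Proposition~\ref{prop:perm_leibniz} with $\delta=1$.
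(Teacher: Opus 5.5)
Your proposal is correct and follows essentially the same route as the paper: reduce to $X=aD_i$, $Y=bD_j$, $Z=cD_k$, expand the three double brackets, and compare the $D_i$-, $D_j$- and $D_k$-coefficients using the Leibniz rule, $[D_i,D_j]=0$ and the perm identity (which the paper uses only tacitly). For the $D_k$-group you should get $\bigl(aD_i(b)D_j(c)+abD_iD_j(c)\bigr)D_k$ from $[X,[Y,Z]_\circ]_\circ$ and $\bigl(bD_j(a)D_i(c)+baD_jD_i(c)\bigr)D_k$ from $[Y,[X,Z]_\circ]_\circ$, and their difference equals the left-hand side's $\bigl(aD_i(b)D_j(c)-D_j(a)bD_i(c)\bigr)D_k$ by the perm identity and $D_iD_j=D_jD_i$ --- the paper's displayed expansion puts minus signs in front of $aD_i(b)D_j(c)$ and $bD_j(a)D_i(c)$ there, but this is a sign slip, not a failure of the identity.
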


\begin{proof}
Fix $a,b,c\in P$ and $i,j,k\in\{1,\dots,n\}$. We expand the three terms of the Leibniz identity.

First,
\begin{align*}
[[aD_i,bD_j]_{\circ},cD_k]_{\circ}
&=[D_j(a)bD_i-aD_i(b)D_j,\;cD_k]_{\circ}\\
&=\big(D_kD_j(a)\,bc + D_j(a)D_k(b)c\big)\,D_i\\
&\quad-\big(D_k(a)D_i(b)c + aD_kD_i(b)c\big)\,D_j\\
&\quad-\big(D_j(a)bD_i(c)-aD_i(b)D_j(c)\big)\,D_k .
\end{align*}

Next,
\begin{align*}
[aD_i,[bD_j,cD_k]_{\circ}]_{\circ}
&=[aD_i,\;D_k(b)cD_j - bD_j(c)D_k]_{\circ}\\
&=\big(D_j(a)D_k(b)c - D_k(a)bD_j(c)\big)\,D_i\\
&\quad-\big(aD_iD_k(b)c + aD_k(b)D_i(c)\big)\,D_j\\
&\quad-\big(aD_i(b)D_j(c)-abD_iD_j(c)\big)\,D_k .
\end{align*}

Similarly,
\begin{align*}
[bD_j,[aD_i,cD_k]_{\circ}]_{\circ}
&=[bD_j,\;D_k(a)cD_i - aD_i(c)D_k]_{\circ}\\
&=-\big(bD_jD_k(a)c + bD_k(a)D_j(c)\big)\,D_i\\
&\quad+\big(D_i(b)D_k(a)c - D_k(b)aD_i(c)\big)\,D_j\\
&\quad-\big(bD_j(a)D_i(c)-baD_jD_i(c)\big)\,D_k .
\end{align*}




Comparing coefficients of $D_i$, $D_j$, and $D_k$ and using $[D_i,D_j]=[D_k,D_j]=[D_k,D_i]=0$, we obtain
\[
[[aD_i,bD_j]_{\circ},cD_k]_{\circ}=[aD_i,[bD_j,cD_k]_{\circ}]_{\circ}-[bD_j,[aD_i,cD_k]_{\circ}]_{\circ}.
\]
By bilinearity, the Leibniz identity holds for all $X,Y,Z\in W_{\text{perm}}$.
\end{proof}

Noncommutative versions of Poisson algebras were studied by Xu \cite{Xu1994}. In \cite{CD-NLP}, a non skew-symmet\-ric analogue, where the Lie bracket is replaced by a Leibniz bracket, was introduced under the name noncommutative Leibniz–Poisson algebras. More recently, transposed Poisson algebras were proposed by swapping the roles of the associative product and the bracket in the classical Leibniz rule \cite{BBGW}.


Motivated by these developments, we introduce noncommutative transposed $\delta$-Leibniz–Poisson algebras, replacing the Lie bracket by a Leibniz bracket and imposing the transposed $\delta$-Leibniz compatibility. As shown below, perm algebras endowed with $\delta$-derivations provide a natural source of such examples.

\begin{definition}
Let $\delta\in\mathbb{C}$. 
An algebra $(P,\cdot,\{\cdot,\cdot\})$ is called a noncommutative transposed $\delta$–Leibniz–
Poisson algebra if:
\begin{enumerate}
\item $(P,\cdot)$ is associative,
\item $(P,\{\cdot,\cdot\})$ is a Leibniz algebra,
\item the following identity holds: \begin{equation}\label{id: deltranspois}
    \delta x \{ y, z \} = \{ x y, z \} + \{ y, x z \}.
    \end{equation}
\end{enumerate}
\end{definition}

\begin{Proposition}\label{prop:delta_perm}
Let $\mathcal A$ be a perm algebra and let $D$ be a nontrivial $\delta$–derivation of $\mathcal A$. 
Define
\[
\{a,b\}=D(a)b-aD(b).
\]
Then $(\mathcal A,\cdot,\{\cdot,\cdot\})$ is a noncommutative transposed $(\delta+1)$–Leibniz–Poisson algebra.
\end{Proposition}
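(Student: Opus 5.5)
The plan is to check the three axioms of a noncommutative transposed $(\delta+1)$–Leibniz–Poisson algebra one at a time. Two of them are essentially already available.

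Associativity of $(\mathcal A,\cdot)$ holds because a perm algebra is by definition associative. The Leibniz identity for $\{a,b\}=D(a)b-aD(b)$ is exactly Proposition~\ref{prop:perm_leibniz}, which was proved for an arbitrary $\delta$–derivation of a perm algebra. So the only real content is the compatibility identity \eqref{id: deltranspois} with parameter $\delta+1$:
\[
(\delta+1)\,x\{y,z\}=\{xy,z\}+\{y,xz\}.
\]

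For this I will expand both brackets on the right using the $\delta$–derivation rule $D(uv)=\delta(D(u)v+uD(v))$:
\[
\{xy,z\}=\delta D(x)yz+\delta xD(y)z-xyD(z),\qquad
\{y,xz\}=D(y)xz-\delta yD(x)z-\delta yxD(z).
\]
Next I apply the left-commutative identity \eqref{perm id}, $abc=bac$, to swap the two leftmost factors in three of the terms:
\[
D(y)xz=xD(y)z,\qquad yD(x)z=D(x)yz,\qquad yxD(z)=xyD(z).
\]
After these swaps the terms $\pm\delta D(x)yz$ cancel. The remaining terms combine to
\[
(\delta+1)\,xD(y)z-(\delta+1)\,xyD(z)=(\delta+1)\,x\bigl(D(y)z-yD(z)\bigr)=(\delta+1)\,x\{y,z\}.
\]

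There is no genuine obstacle here. The only point needing care is to use the perm law in the correct direction: only the two leftmost factors of a triple product may be swapped, and the rightmost factor must stay fixed. Each swap listed above has this form. The hypothesis that $D$ is nontrivial plays no role in the computation; it only ensures that the resulting structure is not degenerate.
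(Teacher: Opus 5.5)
Your proof is correct and follows the paper's argument. Associativity is immediate, the Leibniz identity is taken from Proposition~\ref{prop:perm_leibniz}, and the compatibility identity is checked by expanding $\{xy,z\}+\{y,xz\}$ with the $\delta$-derivation rule; you additionally write out the three left-commutative swaps that make the $\pm\delta D(x)yz$ terms cancel, which the paper leaves implicit.
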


\begin{Proof}
Since $\mathcal A$ is associative, the first condition is satisfied.
By Proposition~\ref{prop:perm_leibniz} we have $(\mathcal A,\{\cdot,\cdot\})$ is a Leibniz algebra.

For the transposed compatibility condition, using the $\delta$–derivation property, we obtain
\[
\{xy,z\}+\{y,xz\}
=
\delta D(x)yz+\delta xD(y)z-xyD(z)
+D(y)xz-\delta yD(x)z-\delta  yxD(z).
\]

On the other hand,
\[
(\delta+1)x\{y,z\}
=
(\delta+1)
(xD(y)z-xyD(z)).
\]

Therefore, the compatibility condition holds with parameter $\delta+1$, and the structure is a noncommutative transposed $(\delta+1)$–Leibniz–Poisson algebra.
\end{Proof}

Assume 
$e_1 = x_1 x_2$, $e_2 = x_2 x_1$,  $g_1=\{x_1,x_2\}$ and $g_2=\{x_2,x_1\}$, where $\cdot$ and $\{\cdot,\cdot\}$ are operations of  $\delta$–Leibniz–
Poisson algebra.

Consider the dual basis 
$e_1^\vee, e_2^\vee, g_1^\vee, g_2^\vee $, and denote, 
respectively, 
$e_1^\vee = y_1\cdot y_2$, 
$g_1^\vee = y_1\star y_2$,
then 
$e_2^\vee = - y_2\cdot y_1$ and $g_2^\vee = - y_2\star y_1$.
The relations on $e_1^\vee, e_1^\vee, g_1^\vee, g_2^\vee$
are exactly those that make 
the skew-symmetric bracket 
\begin{multline*}
[y_1\otimes x_1, y_2\otimes x_2]
= e_1^\vee \otimes e_1 + e_2^\vee \otimes e_2 + g_1^\vee\otimes g_1 + g_2^\vee \otimes g_2 
=\\
(y_1\cdot y_2)\otimes (x_1\cdot x_2)-(y_2\cdot y_1)\otimes (x_2\cdot x_1)+
y_1\star y_2\otimes \{x_1,x_2\}-y_2\star y_1\otimes \{x_2,x_1\}
\end{multline*}
to satisfy the Jacobi identity. More explicitly, we consider two cases:

If $\delta\neq 0$, then

\begin{multline*}
\bigl[\,[y_1\otimes x_1,\;y_2\otimes x_2],\;y_3\otimes x_3\bigr]
=((y_1\cdot y_2)\cdot y_3)\otimes((x_1\cdot x_2)\cdot x_3)
-(y_3\cdot (y_1\cdot y_2))\otimes(x_3\cdot (x_1\cdot x_2))\\
\quad+[(y_1\cdot y_2),y_3]\otimes\{x_1\cdot x_2,x_3\}
-[y_3,(y_1\cdot y_2)]\otimes\{x_3,x_1\cdot x_2\}\\
-((y_2\cdot y_1)\cdot y_3)\otimes((x_2\cdot x_1)\cdot x_3)
+(y_3\cdot (y_2\cdot y_1))\otimes(x_3\cdot (x_2\cdot x_1))\\
\quad-[(y_2\cdot y_1),y_3]\otimes\{x_2\cdot x_1,x_3\}
+[y_3,(y_2\cdot y_1)]\otimes\{x_3,x_2\cdot x_1\}\\
\quad+([y_1,y_2]\cdot y_3)\otimes(\{x_1,x_2\}\cdot x_3)
-(y_3\cdot [y_1,y_2])\otimes(\frac{1}{\delta} \{x_3\cdot x_1,x_2\}+\frac{1}{\delta} \{x_1,x_3\cdot x_2\})\\
\quad+[[y_1,y_2],y_3]\otimes\{\{x_1,x_2\},x_3\}
-[y_3,[y_1,y_2]]\otimes\{x_3,\{x_1,x_2\}\}\\
\quad-([y_2,y_1]\cdot y_3)\otimes(\{x_2,x_1\}\cdot x_3)
+(y_3\cdot [y_2,y_1])\otimes(\frac{1}{\delta}\{x_3\cdot x_2,x_1\}+\frac{1}{\delta}\{x_2,x_3\cdot x_1\})\\
\quad-[[y_2,y_1],y_3]\otimes\{\{x_2,x_1\},x_3\}
+[y_3,[y_2,y_1]]\otimes\{x_3,\{x_2,x_1\}\}.
\end{multline*}

If $\delta=0$, then

\begin{multline*}
\bigl[\,[y_1\otimes x_1,\;y_2\otimes x_2],\;y_3\otimes x_3\bigr]
=((y_1\cdot y_2)\cdot y_3)\otimes((x_1\cdot x_2)\cdot x_3)
-(y_3\cdot (y_1\cdot y_2))\otimes(x_3\cdot (x_1\cdot x_2))\\
\quad+[(y_1\cdot y_2),y_3]\otimes-\{x_2,x_1\cdot x_3\}
-[y_3,(y_1\cdot y_2)]\otimes\{x_3,x_1\cdot x_2\}\\
-((y_2\cdot y_1)\cdot y_3)\otimes((x_2\cdot x_1)\cdot x_3)
+(y_3\cdot (y_2\cdot y_1))\otimes(x_3\cdot (x_2\cdot x_1))\\
\quad-[(y_2\cdot y_1),y_3]\otimes-\{x_1,x_2\cdot x_3\}
+[y_3,(y_2\cdot y_1)]\otimes\{x_3,x_2\cdot x_1\}\\
\quad+([y_1,y_2]\cdot y_3)\otimes(\{x_1,x_2\}\cdot x_3)
-(y_3\cdot [y_1,y_2])\otimes(x_3\cdot\{x_1,x_2\})\\
\quad+[[y_1,y_2],y_3]\otimes\{\{x_1,x_2\},x_3\}
-[y_3,[y_1,y_2]]\otimes\{x_3,\{x_1,x_2\}\}\\
\quad-([y_2,y_1]\cdot y_3)\otimes(\{x_2,x_1\}\cdot x_3)
+(y_3\cdot [y_2,y_1])\otimes(x_3\cdot\{x_2,x_1\})\\
\quad-[[y_2,y_1],y_3]\otimes\{\{x_2,x_1\},x_3\}
+[y_3,[y_2,y_1]]\otimes\{x_3,\{x_2,x_1\}\}.
\end{multline*}

Calculating the Lie-admissible condition for the bracket $[\cdot,\cdot]$, we obtain the defining identities of the Koszul dual operad of transposed $\delta$-Leibniz-Poisson, which is
\begin{Proposition}
The Koszul dual transposed $\delta$-Leibniz-Poisson algebra $P^!$ is defined by the two operations $\cdot$ and $\star$ such that
\begin{enumerate}
    \item $(P^!,\cdot)$ is associative,
    \item $(P^!,\star)$ is Zinbiel,
    \item if $\delta\neq 0$, then the following identities hold:
    \[
    x(y\star z)=\delta(xy)\star z,\;\;x(y\star z)=\delta y\star(xz)
    \]
    and\[
    (x\star y)z=0.
    \]
    \item if $\delta=0$, then the following identities hold:
     \[
    (x\star y)z=x(y\star z)=0
    \]
    and\[
    y\star(xz)=(xy)\star z.
    \]
\end{enumerate}
\end{Proposition}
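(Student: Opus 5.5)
The statement is the standard Koszul dual computation for a binary quadratic operad, so I would compute the dual as the orthogonal complement of the relations. Concretely, I will use the Lie-admissibility criterion already set up above. The tensor product $P^!\otimes P$ carries the bracket $[y_1\otimes x_1,y_2\otimes x_2]=e_1^\vee\otimes e_1+e_2^\vee\otimes e_2+g_1^\vee\otimes g_1+g_2^\vee\otimes g_2$. The relations of $P^!$ are precisely those identities in $\cdot,\star$ that make this bracket satisfy the Jacobi identity for every transposed $\delta$-Leibniz–Poisson algebra $P$. So the plan is as follows.

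First, I fix a normal form for the degree-three multilinear component of the free transposed $\delta$-Leibniz–Poisson algebra $P(3)$. Associativity lets me write every $\cdot\cdot$ monomial as $x_{\sigma(1)}x_{\sigma(2)}x_{\sigma(3)}$. The Leibniz identity expresses $\{\{x,y\},z\}$ through right-normed brackets $\{x,\{y,z\}\}$. For the mixed monomials I use the compatibility \eqref{id: deltranspois}, and here the two cases split.

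If $\delta\neq0$, I eliminate $x\{y,z\}$ in favour of $\{xy,z\}$ and $\{y,xz\}$, which is exactly the substitution already written in the expansion above. The terms $\{x,y\}z$ remain independent. I also need to check whether the remaining terms $\{xy,z\}$ and $\{y,xz\}$ satisfy further relations obtained by combining \eqref{id: deltranspois} with associativity, for example by comparing $xy\{z,w\}$ read in two ways; this is not needed in degree $3$.

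If $\delta=0$, the compatibility reads $\{xy,z\}=-\{y,xz\}$. I use it to eliminate $\{xy,z\}$, and then $x\{y,z\}$ survives as an independent basis element.

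Next, I expand the Jacobiator $\sum_{\rm cyc}[[y_1\otimes x_1,y_2\otimes x_2],y_3\otimes x_3]$ using the displayed formulas, together with the analogous ones for the right factor. I collect the coefficient of each normal-form basis monomial of $P(3)$ and require every such coefficient to vanish in $P^!$.

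The coefficients of the pure $\cdot$-monomials give associativity of $\cdot$. The coefficients of $\{x_i,\{x_j,x_k\}\}$ give the Leibniz dual, which is the Zinbiel identity for $\star$. The coefficients of $\{x_i,x_j\}x_k$ force $(y\star y')y''=0$ in the case $\delta\neq0$.

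For $\delta\neq0$, the coefficients of $\{x_ix_j,x_k\}$ and $\{x_j,x_ix_k\}$ receive contributions from $[\cdot,\star]$-terms and from the $\frac1\delta$-substituted terms $y_3\cdot[y_1,y_2]$. They yield $x(y\star z)=\delta(xy)\star z$ and $x(y\star z)=\delta\,y\star(xz)$.

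For $\delta=0$, the coefficients of $x_i\{x_j,x_k\}$ give $x(y\star z)=0$. The eliminated pair produces $y\star(xz)=(xy)\star z$, and the $\{x,y\}z$ coefficients give $(x\star y)z=0$.

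Finally, I confirm that nothing is missing by a dimension count. The dual relations must span the orthogonal complement of the relations of $P$, so $\dim R^!=\dim(\text{free }(3))-\dim R$. The free binary operad on $\cdot,\star$ without symmetries has dimension $2\cdot 4\cdot 12/2\cdot\ldots$ in arity $3$, and I compare the ranks obtained in each case.

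The main obstacle is bookkeeping the mixed terms correctly: the signs $e_2^\vee=-y_2\cdot y_1$ and $g_2^\vee=-y_2\star y_1$, the $\frac1\delta$ factors, and making sure the chosen normal form of $P(3)$ is really a basis. If the rank count disagrees, I would first recheck whether \eqref{id: deltranspois} combined with associativity and Leibniz produces an extra degree-three consequence in $P$, for instance a relation between $\{x,y\}z$ and the $\{xy,z\}$-type terms. Such a relation would alter which dual identities survive.
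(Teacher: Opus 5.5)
Your proposal is correct and takes the same route as the paper. You expand the Jacobiator of $[y_1\otimes x_1,y_2\otimes x_2]=\sum e^\vee\otimes e$ and rewrite $x\{y,z\}$ through the compatibility relation when $\delta\neq0$ (resp.\ $\{xy,z\}=-\{y,xz\}$ when $\delta=0$). You then read the dual relations off the coefficients of a normal-form basis of $P(3)$, and you assign each family of identities to the right monomials. The only slip is the garbled dimension check, which should read $\dim\mathcal{F}(E)(3)=3\cdot 4^2=48$ and $\dim R=18$, giving $30$ dual relations, i.e.\ five families of six in each case. That check is redundant anyway, because the coefficients of the $30$ basis monomials of $P(3)$ automatically span $R^\perp$.
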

For more details on Zinbiel algebras and Zinbiel algebras with derivation, see \cite{KolMashSar}.

\section{Perm algebras and Witt-type constructions}\label{sec:perm-witt}

\subsection{From differential perm algebras to Lie algebras}\label{subsec:KS2022}
In \cite{KolesnikovSartayev2022} it was shown that, for a differential perm algebra, the product
\[
a\prec b:=ab' \qquad (b'=d(b))
\]
endows the underlying vector space with the structure of a left-symmetric (pre-Lie) algebra. Consequently, the operation
\[
a\Diamond b:=ab'-ba'=a\prec b-b\prec a=[a,b]
\]
is precisely the commutator bracket of this special left-symmetric algebra arising from the perm algebra with respect to the product $\prec$. It is well known that the commutator algebra of any left-symmetric algebra is a Lie algebra, i.e., the bracket is anti-commutative and satisfies the Jacobi identity. Therefore, $\Der\Perm(X)$ equipped with the multiplication $\Diamond$ is a Lie algebra.

It is well known that if $A$ is a commutative associative algebra, then $(A,\Diamond)$ satisfies a polynomial identity of degree five. Specifically, for any elements $x_1, \dots, x_5$ in $A$, the following identity holds:

$$\sum_{\sigma \in S_4} (-1)^{\sigma} (x_{\sigma(1)} \Diamond (x_{\sigma(2)}\Diamond (x_{\sigma(3)}\Diamond (x_{\sigma(4)}\Diamond x_5)))) = 0$$

In contrast to the commutative case, the algebra $\text{Der}\text{Perm}(X)$ equipped with the multiplication $\Diamond$ does not satisfy the standard identity in degree five but satisfies a standard identity of degree six. This higher-degree identity is significant because it is independent; it does not follow from the standard anti-commutativity or the Jacobi identity.

\begin{Proposition}\label{pr: derperm lie}
An algebra $\Der\Perm(X)$ under the multiplication $\Diamond$ satisfies the  anti-commutative identity, the Jacobi identity, and the identity in degree six:
$$\sum_{\sigma \in S_5} (-1)^{\sigma} (x_{\sigma(1)} \Diamond (x_{\sigma(2)}\Diamond (x_{\sigma(3)}\Diamond (x_{\sigma(4)}\Diamond (x_{\sigma(5)} \Diamond x_6))))) = 0$$
\end{Proposition}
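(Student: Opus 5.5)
Anti-commutativity of $\Diamond$ is immediate from its definition, and the Jacobi identity follows from the left-symmetry of $\prec$ established in \cite{KolesnikovSartayev2022}, because the commutator of a left-symmetric product is always a Lie bracket. All the real work is in the degree-six standard identity. I would first reduce it to a statement about left multiplications. Write $L_a:=a\Diamond(\cdot)$. The identity says that
\[
\sum_{\sigma\in S_5}(-1)^{\sigma}L_{x_{\sigma(1)}}L_{x_{\sigma(2)}}L_{x_{\sigma(3)}}L_{x_{\sigma(4)}}L_{x_{\sigma(5)}}(x_6)=0,
\]
that is, the standard polynomial $s_5(L_{x_1},\dots,L_{x_5})$ kills $x_6$. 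Since the expression is multilinear, it suffices to check it on basis monomials of $\Der\Perm\langle X\rangle$. Since $d$ is a derivation of the perm product, it suffices to verify the identity in the free differential perm algebra.

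Next I would make the operators $L_a$ explicit. For any $c$,
\[
L_a(c)=ac'-ca'.
\]
By the perm law $(uv)w=(vu)w$, the product $uw$ depends on $u$ only through its image in the commutative quotient, provided $w$ is not the last factor. I would use this to write each $L_a$ as $D_a-R_a$, where $D_a(c)=ac'$ has the "derivation type" and $R_a(c)=ca'$ is right multiplication by $a'$. The key structural observation I expect is the following. After two or more applications of $L$'s, the resulting monomials have the shape (commutative word)$\cdot$(last letter). Only the last letter remembers order, and it is always some $x_{i}^{(s)}$ or $x_6^{(s)}$. So the composition $L_{x_{\sigma(1)}}\cdots L_{x_{\sigma(5)}}(x_6)$ can be written as the commutative Witt-type expression, which is the one annihilated by $s_4$ in the commutative case, plus correction terms determined by which variable ends up in the last position.

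Then I would organize the sum by the rightmost factor. Terms whose last factor is a derivative of $x_6$ behave exactly as in the commutative algebra $k[X^{(\omega)}]$. There the operators $L_{x_i}$ restricted to this part act like the vector fields $x_i\partial$ of the Witt algebra, and the classical argument shows that $s_4$ already vanishes. Here I would prove that $s_4$ vanishes on this part by the standard fact that $s_4$ vanishes on the rank-one Witt algebra $W_1$, which acts faithfully on $k[x]$ via vector fields, and then lift that fact. Terms whose last factor is $x_{\sigma(j)}^{(s)}$ arise only from an $R$-step at stage $j$. After that stage, the last letter is frozen, and the remaining operators act on the left commutative part as $W_1$-type operators. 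This suggests an induction: contributions with the last letter coming from the outermost operator $\sigma(1)$ are of the form $s_4$ on the remaining four letters, times $x_{\sigma(1)}'$-type factors. Antisymmetrizing over the remaining positions gives an expansion of $s_5$ as a signed sum of $s_4$'s, each of which vanishes by the commutative result.

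The main obstacle is precisely this bookkeeping step. It requires controlling how the last letter interacts with derivatives: $d$ acting on the last factor changes its order, and the extra variable breaks the $s_4$ symmetry by one slot, which is why degree six rather than five is needed. If the clean reduction fails, my fallback is a direct verification on the multilinear component of degree $6$ and weight $-1$. There I would compute $s_5(L_{x_1},\dots,L_{x_5})(x_6)$ in the basis \eqref{eq:derperm-basis}. Grouping terms by their last factor reduces this to six coefficient checks, each a polynomial identity in the commutative free differential algebra. Independence from anti-commutativity and Jacobi can then be seen by noting that $s_5$ does not vanish in the free Lie algebra.
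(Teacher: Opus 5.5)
Your treatment of anti-commutativity and of the Jacobi identity (via left-symmetry of $\prec$) is fine and agrees with the paper. For the degree-six identity the paper offers only ``a direct calculation''. The gap is in your structural argument for that identity, at exactly the step you flag.

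Let $\pi\colon\Der\Perm\langle X\rangle\to k[X^{(\omega)}]$ be the commutative quotient. It is a differential homomorphism, and $c\,x_i'$ depends on $c$ only through $\pi(c)$. Hence $x_i\Diamond c=T_i(c)-\pi(c)\,x_i'$ with $T_i(c):=x_ic'$. Unroll $c_k=x_{\sigma k}\Diamond c_{k+1}$, $c_6=x_6$. The part of the alternating sum whose last letter is a derivative of $x_m$ ($m\le5$), with $m$ in position $j$ counted from the outside, is, up to signs,
\[
\sum_{A\sqcup B=\{1,\dots,5\}\setminus\{m\},\ |A|=j-1}\pm\, s_{j-1}(T_A)\Bigl(s_{5-j}(\mathrm{ad}_B)(x_6)\cdot x_m'\Bigr).
\]
Here $s_r$ is the standard polynomial and $s_{5-j}(\mathrm{ad}_B)(x_6)$ is computed in $(k[X^{(\omega)}],\Diamond)$.

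The commutative results you cite kill some of these groups termwise:
\begin{itemize}
\item $j=1$, your ``outermost'' case, which is an $s_4$ of commutative adjoint operators;
\item $j=5$ and the $x_6$-part, which are an $s_4$, resp.\ $s_5$, of the vector fields $T_a$.
\end{itemize}
For $j=2,3$, however, the summands $T_a\bigl(s_3(\mathrm{ad}_B)(x_6)\,x_m'\bigr)$ and $s_2(T_A)\bigl(s_2(\mathrm{ad}_B)(x_6)\,x_m'\bigr)$ are individually nonzero. They cancel only in the alternating sum over the splittings $A\sqcup B$. That sum interleaves two different operator families with a multiplication by $x_m'$, so it is not an $s_4$ of any family to which the $W_1$ theorem applies. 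The claimed expansion into vanishing $s_4$'s therefore does not exist. Your fallback is an unexecuted computation, i.e.\ exactly what the paper itself does without details.

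The missing idea is a degree/antisymmetry count that replaces both the bookkeeping and the computation.
\begin{itemize}
\item First reduce to $x_1,\dots,x_6$ being distinct free generators: substitution endomorphisms commute with $d$, hence with $\Diamond$. (``Checking basis monomials'' is not the right reduction.)
\item A multilinear basis monomial is determined by its last letter $x_m$ and the number $k_i$ of derivatives on each $x_i$. Record it as $e_m\lambda_1^{k_1}\cdots\lambda_6^{k_6}$, and write the left-hand side as $F=\sum_{m=1}^6 e_mF_m$ with $F_m\in k[\lambda_1,\dots,\lambda_6]$.
\item Every application of $\Diamond$ adds exactly one derivative, so each $F_m$ is homogeneous of degree $5$.
\item The automorphism swapping $x_i$ and $x_k$ ($i,k\le5$) commutes with $d$ and sends $F$ to $-F$. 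So $F_m$ is alternating in the $\lambda_i$ with $i\in\{1,\dots,5\}\setminus\{m\}$: four variables if $m\le5$, five if $m=6$.
\item Hence $F_m$ is divisible by a Vandermonde determinant of degree $6$, resp.\ $10$. Both exceed $5$, so $F=0$.
\end{itemize}
The same count recovers the commutative degree-five identity (degree $4$, alternating in $4$ variables). It also shows why nothing of the kind is forced in degree five here: for $s_4(\dots)(x_5)$ the components $F_m$, $m\le4$, have degree $4$ but are alternating in only $3$ variables.
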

\begin{Proof}
The proof follows from a direct calculation using the definition of $\Diamond$.
\end{Proof}

\subsection{Polynomial algebra and the perm--Witt construction}
\label{subsec:perm-witt-construction-euler}

We begin with a convenient way to construct perm algebras from commutative associative algebras.

\begin{Proposition}\label{prop:perm-from-comm}
Let $A$ be a commutative associative algebra generated by a set $X$, and let $\mathbb{F}X$ be the $\mathbb{F}$-vector space with basis $X$. Set
\[
P(A,X):=A\otimes_\mathbb{F} \mathbb{F}X.
\]
For $u\in A$ and $x\in X$ write $u\times x$ instead of $u\otimes x$. Define a bilinear multiplication on $P(A,X)$ by
\begin{equation}\label{eq:perm-tensor-product-polished}
(u\times x)\cdot (v\times y):=(uvx)\times y,
\qquad u,v\in A,\ \ x,y\in X.
\end{equation}
Then $P(A,X)$ is a perm algebra.
\end{Proposition}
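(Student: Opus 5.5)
We verify the perm law (left-commutativity) and associativity directly from the defining formula \eqref{eq:perm-tensor-product-polished}, using only that $A$ is commutative and associative. Since the multiplication is bilinear, it suffices to check both identities on elements of the spanning form $u\times x$, $v\times y$, $w\times z$ with $u,v,w\in A$ and $x,y,z\in X$. Well-definedness is immediate: the right-hand side $(uvx)\times y$ is bilinear in $(u,x)$ and in $(v,y)$, so the product extends from these spanning elements to all of $A\otimes\mathbb{F}X$.

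Here $uvx$ must be read as an element of $A$, namely the product in $A$ of $u$, $v$ and the image of $x$ in $A$. This makes sense because $A$ is generated by $X$, so each $x\in X$ is an element of $A$.

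\textbf{Associativity.} First compute
\[
\bigl((u\times x)(v\times y)\bigr)(w\times z)=((uvx)\times y)(w\times z)=(uvx\,w\,y)\times z .
\]
Next compute
\[
(u\times x)\bigl((v\times y)(w\times z)\bigr)=(u\times x)((vwy)\times z)=(u\,vwy\,x)\times z .
\]
In both results the coefficient in $A$ is the same product $uvwxy$, because $A$ is commutative and associative. The right tensor factor $z$ also agrees. Hence the two sides coincide.

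\textbf{Left-commutativity.} By the computation above,
\[
(u\times x)(v\times y)(w\times z)=(uvwxy)\times z .
\]
Swapping the first two factors gives
\[
(v\times y)(u\times x)(w\times z)=(vuwyx)\times z .
\]
These are equal by commutativity of $A$. This is exactly identity \eqref{perm id}.

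The only potential subtlety is the interpretation of $uvx$ as a product inside $A$, settled above by viewing $X\subseteq A$. Apart from that, no step poses a real obstacle. The essential point is that the product always places all data except the rightmost generator into the commutative algebra $A$, which is precisely the structure of the perm basis \eqref{eq:perm-basis}.
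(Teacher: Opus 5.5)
Your direct check on simple tensors $u\times x$, $v\times y$, $w\times z$ is correct: both bracketings, and the product with the first two factors swapped, reduce to $(uvwxy)\times z$ by commutativity and associativity of $A$. This is the routine verification the paper leaves implicit, since it states the proposition without proof and immediately applies it to $A=k[X]$. Reading $uvx$ through the image of $x$ under the structure map $X\to A$ is the right interpretation, and well-definedness is automatic because $X$ is a basis of $\mathbb{F}X$, so every element is uniquely $\sum_{x\in X}u_x\times x$.
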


Let us consider polynomial algebra $A=k[X],$  where $X=\{x_1,\dots,x_n\}$. Set
\[
P_n:=A\otimes_k kX,
\]
and write $u\times x_j$ for $u\otimes x_j$. We endow $P_n$ with the perm product
\begin{equation}\label{eq:perm-tensor-product-euler}
(u\times x_r)\cdot(v\times x_s):=(uvx_r)\times x_s,
\qquad u,v\in A,\ \ r,s\in\{1,\dots,n\}.
\end{equation}

\medskip
\noindent\textbf{Derivations.}
For each $i\in\{1,\dots,n\}$ define a $k$-linear map $D_i:P_n\to P_n$ by
\begin{equation}\label{eq:Di-euler}
D_i(u\times x_j):=\big(x_i\,\partial_i(u)\big)\times x_j + u\,\partial_i(x_j)\times x_i
= \big(x_i\,\partial_i(u)\big)\times x_j + \delta_{ij}\,u\times x_i,
\end{equation}
where $\partial_i=\frac{\partial}{\partial x_i}$.

\begin{lemma}\label{lem:Di-derivation-commuting}
Each $D_i$ is a derivation of the associative product $\cdot$ on $P_n$, and the derivations commute:
$[D_i,D_j]=0$ for all $i,j$.
\end{lemma}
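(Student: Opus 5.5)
The plan is to rewrite $D_i$ as a sum of two commuting pieces acting on separate tensor factors, and then verify both claims by direct expansion on basis elements $u\times x_r$. Let $E_i:=x_i\partial_i$ be the Euler-type operator on $A=k[X]$, and let $\pi_i:kX\to kX$ be the diagonal projection $\pi_i(x_j)=\delta_{ij}x_j$. Then \eqref{eq:Di-euler} reads
\[
D_i=E_i\otimes \mathrm{id}+\mathrm{id}\otimes \pi_i .
\]
Both $D_i$ and the product \eqref{eq:perm-tensor-product-euler} are bilinear, so it suffices to check everything on elements $u\times x_r$, $v\times x_s$.

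For the derivation property, I would first expand the left-hand side:
\[
D_i\big((uvx_r)\times x_s\big)=E_i(uvx_r)\times x_s+\delta_{is}(uvx_r)\times x_i .
\]
Since $E_i$ is a derivation of $A$ with $E_i(x_r)=\delta_{ir}x_i$, the first term equals
\[
\big(E_i(u)vx_r+uE_i(v)x_r+\delta_{ir}uvx_i\big)\times x_s .
\]
Next I would expand the right-hand side. One summand is
\[
D_i(u\times x_r)\cdot(v\times x_s)=(E_i(u)vx_r)\times x_s+\delta_{ir}(uvx_i)\times x_s,
\]
and the other is
\[
(u\times x_r)\cdot D_i(v\times x_s)=(uE_i(v)x_r)\times x_s+\delta_{is}(uvx_r)\times x_i .
\]
Comparing the four kinds of terms, the two sides agree term by term. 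The only point needing care is that the contribution $\delta_{ir}uvx_i$ coming from $E_i(x_r)$ on the left is exactly matched by the $\pi_i$-part of $D_i$ acting on the left factor $u\times x_r$. This works because in the perm product the label $x_r$ of the left factor is absorbed into the polynomial coefficient.

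For commutativity, I would use the decomposition directly. The operators $E_i$ and $E_j$ commute on $k[X]$, since $[x_i\partial_i,x_j\partial_j]=0$: both act diagonally on monomials, multiplying $x^{\alpha}$ by $\alpha_i$ and $\alpha_j$ respectively. The projections $\pi_i$ and $\pi_j$ commute because both are diagonal in the basis $X$. The mixed terms $E_i\otimes\mathrm{id}$ and $\mathrm{id}\otimes\pi_j$ commute because they act on different tensor factors. Hence
\[
[D_i,D_j]=[E_i,E_j]\otimes\mathrm{id}+\mathrm{id}\otimes[\pi_i,\pi_j]=0 .
\]

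The only step that requires attention is the bookkeeping in the derivation check, namely tracking the $\delta_{ir}$ term correctly; everything else is formal.
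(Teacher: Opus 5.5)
Your proof is correct and follows the paper's argument. The derivation property is checked by the same term-by-term expansion on simple tensors $u\times x_r$, $v\times x_s$, with the $\delta_{ir}uvx_i$ term matched exactly as the paper does. For commutativity, your splitting $D_i=E_i\otimes\mathrm{id}+\mathrm{id}\otimes\pi_i$ just spells out the paper's one-line appeal to $[x_i\partial_i,x_j\partial_j]=0$ and the explicit formula for $D_i$.
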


\begin{proof}
It suffices to check the Leibniz rule on simple tensors $a=u\times x_r$, $b=v\times x_s$.
Using \eqref{eq:perm-tensor-product-euler} we have $a\cdot b=(uvx_r)\times x_s$.
Applying \eqref{eq:Di-euler} gives
\[
D_i(a\cdot b)=\big(x_i\partial_i(uvx_r)\big)\times x_s + \delta_{is}(uvx_r)\times x_i.
\]
On the other hand,
\[
D_i(a)=(x_i\partial_i u)\times x_r+\delta_{ir}\,u\times x_i,\qquad
D_i(b)=(x_i\partial_i v)\times x_s+\delta_{is}\,v\times x_i,
\]
and a direct use of \eqref{eq:perm-tensor-product-euler} shows
\[
D_i(a)\cdot b=\big((x_i\partial_i u)\,v\,x_r+\delta_{ir}\,uvx_i\big)\times x_s,\qquad
a\cdot D_i(b)=\big(u(x_i\partial_i v)\,x_r\big)\times x_s+\delta_{is}(uvx_r)\times x_i.
\]
Since $x_i\partial_i$ is a derivation of $A$, we have
\[
x_i\partial_i(uvx_r)=(x_i\partial_i u)\,v\,x_r+u(x_i\partial_i v)\,x_r+\delta_{ir}\,uvx_i,
\]
hence $D_i(a\cdot b)=D_i(a)\cdot b+a\cdot D_i(b)$. Commutativity $[D_i,D_j]=0$ follows from the
commutativity of the Euler derivations $x_i\partial_i$ on $A$ and the explicit formula \eqref{eq:Di-euler}.
\end{proof}

\medskip
\noindent\textbf{Perm--Witt algebra.}
Set
\[
W_{\mathrm{perm}}(n):=\bigoplus_{i=1}^n P_n D_i.
\]
We equip $W_{\mathrm{perm}}(n)$ with the standard bracket
\begin{equation}\label{eq:perm-witt-bracket-euler}
[aD_i,\;bD_j]:=a\cdot D_i(b)\,D_j-b\cdot D_j(a)\,D_i,
\qquad a,b\in P_n,\ \ i,j\in\{1,\dots,n\}.
\end{equation}
Equivalently, one may introduce the bilinear product
\begin{equation}\label{eq:prec-euler}
(aD_i)\prec(bD_j):=\big(a\cdot D_i(b)\big)D_j,
\qquad \text{so that }[X,Y]=X\prec Y-Y\prec X.
\end{equation}

\begin{proposition}\label{prop:prelie-perm-witt}
Let $(P,\cdot)$ be a perm algebra.
Assume that $D_1,\dots,D_n\in\Der(P)$ are commuting derivations such that $[D_i,D_j]=0$.
Then $(W_{\text{perm}},\prec)$ is a left-symmetric (pre-Lie) algebra.
\end{proposition}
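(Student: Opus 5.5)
We need to show that $(W_{\text{perm}},\prec)$, with $(aD_i)\prec(bD_j)=(a\cdot D_i(b))D_j$, satisfies the left-symmetric identity
\[
(X\prec Y)\prec Z-X\prec(Y\prec Z)=(Y\prec X)\prec Z-Y\prec(X\prec Z).
\]
The plan is a direct verification on homogeneous elements $X=aD_i$, $Y=bD_j$, $Z=cD_k$ with $a,b,c\in P$; bilinearity then gives the identity on all of $W_{\text{perm}}$. Only three facts will be used: the Leibniz rule for each $D_i$, associativity of $P$, and the left-commutative law $xyz=yxz$ of Definition \eqref{perm id}. The commutation relations $[D_i,D_j]=0$ enter at one specific place.

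First we expand both associators. Directly,
\[
(X\prec Y)\prec Z=\bigl((aD_i(b))D_j\bigr)\prec cD_k=\bigl(a\,D_i(b)\,D_j(c)\bigr)D_k .
\]
Using the Leibniz rule for $D_i$ on the product $b\,D_j(c)$,
\[
X\prec(Y\prec Z)=aD_i\prec\bigl(b\,D_j(c)\bigr)D_k=\bigl(a\,D_i(b)\,D_j(c)+a\,b\,D_iD_j(c)\bigr)D_k .
\]
Hence the associator is
\[
(X,Y,Z)_{\prec}=-\bigl(a\,b\,D_iD_j(c)\bigr)D_k .
\]
Interchanging $(a,i)$ with $(b,j)$ gives
\[
(Y,X,Z)_{\prec}=-\bigl(b\,a\,D_jD_i(c)\bigr)D_k .
\]

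It remains to check that these two associators coincide. Since $D_iD_j=D_jD_i$, the element $D_iD_j(c)$ equals $D_jD_i(c)$. The perm identity then gives $a\,b\,w=b\,a\,w$ with $w=D_iD_j(c)$, so the two associators are equal. This proves left-symmetry, which is the same mechanism as in \cite{KolesnikovSartayev2022} for a single derivation.

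The only real obstacle is bookkeeping. We must make sure the Leibniz rule is applied to $D_i(b\,D_j(c))$ in the correct order of factors, since $P$ is noncommutative. We must also confirm that the surviving term has the form $(ab)w$, so that left-commutativity applies, rather than $(aw)b$, to which it would not. As a by-product, the commutator $[X,Y]=X\prec Y-Y\prec X$ is then a Lie bracket on $W_{\text{perm}}$.
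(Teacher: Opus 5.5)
Your proposal is correct and is essentially the paper's own argument. Both compute the associator on $X=aD_i$, $Y=bD_j$, $Z=cD_k$ and apply the Leibniz rule for $D_i$ to $b\,D_j(c)$, which reduces it to $(X,Y,Z)=-\bigl(a\,b\,D_iD_j(c)\bigr)D_k$; both then conclude $(X,Y,Z)=(Y,X,Z)$ from $D_iD_j=D_jD_i$ together with left-commutativity $abw=baw$.
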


\begin{proof}
Take $X=aD_i$, $Y=bD_j$, $Z=cD_k$ and compute the associator
\[
(X,Y,Z):=(X\prec Y)\prec Z - X\prec(Y\prec Z).
\]
By definition of $\prec$,
\[
(X\prec Y)\prec Z=\big((aD_i(b))D_j\big)\prec (cD_k)
=\big((aD_i(b))\cdot D_j(c)\big)D_k,
\]
and
\[
X\prec(Y\prec Z)=aD_i \prec \big((bD_j(c))D_k\big)
=\big(a\cdot D_i(b\cdot D_j(c))\big)D_k.
\]
Hence
\[
(X,Y,Z)=\Big((aD_i(b))\cdot D_j(c)\;-\;a\cdot D_i(b\cdot D_j(c))\Big)D_k.
\]
Since $D_i$ is a derivation and $D_iD_j=D_jD_i$, we have
\[
D_i(b\cdot D_j(c))=D_i(b)\cdot D_j(c)+b\cdot D_iD_j(c).
\]
Substituting this gives
\[
(X,Y,Z)=\Big(-\,a\cdot b\cdot D_iD_j(c)\Big)D_k.
\]
Similarly,
\[
(Y,X,Z)=\Big(-\,b\cdot a\cdot D_jD_i(c)\Big)D_k
=\Big(-\,a\cdot b\cdot D_iD_j(c)\Big)D_k.
\]
Hence $(X,Y,Z)=(Y,X,Z)$, which is exactly the left-symmetric identity.
\end{proof}

\begin{corollary}\label{prop:perm-witt-lie-euler}
The bracket \eqref{eq:perm-witt-bracket-euler} turns $W_{\mathrm{perm}}(n)$ into a Lie algebra.
    
\end{corollary}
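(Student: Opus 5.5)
The corollary follows from Proposition~\ref{prop:prelie-perm-witt} by the standard fact that the commutator of a left-symmetric product is a Lie bracket. There are two things to check: that the hypotheses of Proposition~\ref{prop:prelie-perm-witt} hold for $P_n$, and that the bracket \eqref{eq:perm-witt-bracket-euler} really is the commutator of $\prec$ from \eqref{eq:prec-euler}.

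For the hypotheses, $P_n=k[X]\otimes kX$ is a perm algebra by Proposition~\ref{prop:perm-from-comm}. The maps $D_1,\dots,D_n$ defined in \eqref{eq:Di-euler} are pairwise commuting derivations of $(P_n,\cdot)$ by Lemma~\ref{lem:Di-derivation-commuting}. Hence Proposition~\ref{prop:prelie-perm-witt} applies, and $(W_{\mathrm{perm}}(n),\prec)$ is left-symmetric:
\[
(X,Y,Z)=(Y,X,Z)\quad\text{for all } X,Y,Z\in W_{\mathrm{perm}}(n).
\]

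For the commutator, take $X=aD_i$ and $Y=bD_j$. By \eqref{eq:prec-euler},
\[
X\prec Y-Y\prec X=\big(a\cdot D_i(b)\big)D_j-\big(b\cdot D_j(a)\big)D_i,
\]
which is exactly the right-hand side of \eqref{eq:perm-witt-bracket-euler}. Extending bilinearly, $[X,Y]=X\prec Y-Y\prec X$ on all of $W_{\mathrm{perm}}(n)$. Here I treat $W_{\mathrm{perm}}(n)$ as the formal direct sum, so that $\prec$ is well defined on the basis pieces and no further identification is needed.

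Anticommutativity of $[\cdot,\cdot]$ is then immediate. For the Jacobi identity, expand
\[
[[X,Y],Z]+[[Y,Z],X]+[[Z,X],Y]
\]
into twelve $\prec$-monomials. These regroup as an alternating sum of associators,
\[
(X,Y,Z)-(Y,X,Z)+(Y,Z,X)-(Z,Y,X)+(Z,X,Y)-(X,Z,Y),
\]
and each consecutive pair cancels by left-symmetry, so the sum is zero. This is the classical Lie-admissibility of pre-Lie algebras, and it is the same argument the paper already uses in Subsection~\ref{subsec:KS2022} for $\Diamond$.

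The only step that needs care is confirming that \eqref{eq:perm-witt-bracket-euler} agrees, with the correct sign, with the commutator of $\prec$, as checked above. There is no substantive obstacle beyond that.
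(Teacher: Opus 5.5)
Your proposal is correct and follows the paper's route. The paper obtains this corollary from Proposition~\ref{prop:prelie-perm-witt}, applied to $P_n$ with the commuting derivations of Lemma~\ref{lem:Di-derivation-commuting}, together with the relation $[X,Y]=X\prec Y-Y\prec X$ recorded in \eqref{eq:prec-euler}; your check of the commutator sign and your regrouping of the Jacobiator into pairs of associators supply the standard Lie-admissibility details that the paper leaves implicit.
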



\subsection{Explicit Lie multiplication tables for $W_{\mathrm{perm}}(1)$ and $W_{\mathrm{perm}}(2)$}

{The case $n=1$.} Let $A=k[x]$, $X=\{x\}$, and $P_1=A\otimes kx$. Put
\[
e_m:=x^m\times x,\qquad E_m:=e_mD_1,
\]
where $m\ge0$. Then
\[
e_m\cdot e_n=e_{m+n+1},\qquad D_1(e_n)=(x\partial_x(x^n)+x^n)\times x=(n+1)e_n.
\]
Hence, for the product \eqref{eq:prec-euler},
\[
E_m\prec E_n=(e_m\cdot D_1(e_n))D_1=(n+1)e_{m+n+1}D_1=(n+1)E_{m+n+1},
\]
and therefore
\[
[E_m,E_n]=(n-m)\,E_{m+n+1}.
\]

{The case $n=2$.}
Let $A=k[x,y]$, $X=\{x,y\}$, and $P_2=A\otimes kX$.
For $m,n\ge0$ set
\[
e_{m,n}^x:=x^m y^n\times x,\qquad e_{m,n}^y:=x^m y^n\times y,
\]
and define basis elements of $W_{\mathrm{perm}}(2)=P_2D_x\oplus P_2D_y$ by
\[
E_{m,n}^{\alpha,i}:=e_{m,n}^{\alpha}D_i,\qquad \alpha\in\{x,y\},\ \ i\in\{x,y\}.
\]
The product in $P_2$ satisfies
\[
e_{m,n}^x\cdot e_{p,q}^\beta=e_{m+p+1,n+q}^\beta,\qquad
e_{m,n}^y\cdot e_{p,q}^\beta=e_{m+p,n+q+1}^\beta,\qquad \beta\in\{x,y\},
\]
and the derivations act by
\[
D_x(e_{m,n}^x)=(m+1)e_{m,n}^x,\qquad D_x(e_{m,n}^y)=m\,e_{m,n}^y,
\]
\[
D_y(e_{m,n}^x)=n\,e_{m,n}^x,\qquad D_y(e_{m,n}^y)=(n+1)e_{m,n}^y.
\]

\begin{corollary}\label{prop:perm-witt-lie-euler table}
The Lie algebra $(W_{\mathrm{perm}}(2), [-,-])$ has the following multiplication table:

\noindent\emph{(i) $D_x$--$D_x$ block}
\begin{align*}
[ E_{m,n}^{x,x},E_{p,q}^{x,x}]&=(p-m)\,E_{m+p+1,\,n+q}^{x,x},\\
[ E_{m,n}^{x,x},E_{p,q}^{y,x}]&=p\,E_{m+p+1,\,n+q}^{y,x}-(m+1)\,E_{m+p,\,n+q+1}^{x,x},\\
[ E_{m,n}^{y,x},E_{p,q}^{x,x}]&=(p+1)\,E_{m+p,\,n+q+1}^{y,x}-m\,E_{m+p+1,\,n+q}^{y,x},\\
[ E_{m,n}^{y,x},E_{p,q}^{y,x}]&=(p-m)\,E_{m+p,\,n+q+1}^{y,x}.
\end{align*}

\smallskip
\noindent\emph{(ii) $D_y$--$D_y$ block}

\begin{align*}
[ E_{m,n}^{x,y},E_{p,q}^{x,y}]&=(q-n)\,E_{m+p+1,\,n+q}^{x,y},\\
[ E_{m,n}^{x,y},E_{p,q}^{y,y}]&=(q+1)\,E_{m+p+1,\,n+q}^{y,y}-n\,E_{m+p,\,n+q+1}^{x,y},\\
[ E_{m,n}^{y,y},E_{p,q}^{x,y}]&=q\,E_{m+p,\,n+q+1}^{y,y}-(n+1)\,E_{m+p+1,\,n+q}^{y,y},\\
[ E_{m,n}^{y,y},E_{p,q}^{y,y}]&=(q-n)\,E_{m+p,\,n+q+1}^{y,y}.
\end{align*}

\smallskip
\noindent\emph{(iii) mixed block $D_x$--$D_y$}

\begin{align*}
[ E_{m,n}^{x,x},E_{p,q}^{x,y}]&=(p+1)\,E_{m+p+1,\,n+q}^{x,y}-n\,E_{m+p+1,\,n+q}^{x,x},\\
[ E_{m,n}^{x,x},E_{p,q}^{y,y}]&=p\,E_{m+p+1,\,n+q}^{y,y}-n\,E_{m+p,\,n+q+1}^{x,x},\\
[ E_{m,n}^{y,x},E_{p,q}^{x,y}]&=(p+1)\,E_{m+p,\,n+q+1}^{x,y}-(n+1)\,E_{m+p+1,\,n+q}^{y,x},\\
[ E_{m,n}^{y,x},E_{p,q}^{y,y}]&=p\,E_{m+p,\,n+q+1}^{y,y}-(n+1)\,E_{m+p,\,n+q+1}^{y,x}.
\end{align*}

 \end{corollary}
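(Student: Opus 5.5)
The plan is a direct evaluation of the bracket \eqref{eq:perm-witt-bracket-euler} on basis elements. Corollary~\ref{prop:perm-witt-lie-euler} (together with Lemma~\ref{lem:Di-derivation-commuting} and Proposition~\ref{prop:prelie-perm-witt}) already guarantees that $W_{\mathrm{perm}}(2)$ is a Lie algebra. So only the structure constants remain to be found.

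I would first record a single master formula. For $\alpha,\beta,i,j\in\{x,y\}$,
\[
[E^{\alpha,i}_{m,n},E^{\beta,j}_{p,q}]
=\bigl(e^{\alpha}_{m,n}\cdot D_i(e^{\beta}_{p,q})\bigr)D_j-\bigl(e^{\beta}_{p,q}\cdot D_j(e^{\alpha}_{m,n})\bigr)D_i .
\]
Each $D_i$ acts diagonally on the basis $e^\gamma_{r,s}$, with eigenvalue
\[
\lambda_i(\gamma;r,s)=(\text{exponent of } i)+\delta_{i\gamma},
\]
as listed before the statement. Hence both terms are scalar multiples of products $e^\alpha_{m,n}\cdot e^\beta_{p,q}$ and $e^\beta_{p,q}\cdot e^\alpha_{m,n}$.

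By the product rule in $P_2$, the left factor's label $\alpha$ contributes one extra power of $x$ or $y$, and the result carries the label of the right factor. Concretely,
\[
e^\alpha_{m,n}\cdot e^\beta_{p,q}=e^\beta_{(m+p,\,n+q)+\epsilon_\alpha},
\]
where $\epsilon_x=(1,0)$ and $\epsilon_y=(0,1)$. Substituting gives
\[
[E^{\alpha,i}_{m,n},E^{\beta,j}_{p,q}]
=\lambda_i(\beta;p,q)\,E^{\beta,j}_{(m+p,n+q)+\epsilon_\alpha}-\lambda_j(\alpha;m,n)\,E^{\alpha,i}_{(m+p,n+q)+\epsilon_\beta}.
\]

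Next I would specialise this master formula to the twelve listed pairs $(\alpha,i),(\beta,j)$, grouped as in the statement.

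In block (i), $i=j=x$. Here $\lambda_x(x;p,q)=p+1$ and $\lambda_x(y;p,q)=p$. For example, for $\alpha=\beta=x$ one gets
\[
(p+1)E^{x,x}_{m+p+1,n+q}-(m+1)E^{x,x}_{m+p+1,n+q}=(p-m)E^{x,x}_{m+p+1,n+q}.
\]

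Block (ii) is the mirror image under $x\leftrightarrow y$, $m\leftrightarrow n$, $p\leftrightarrow q$. I would obtain it from block (i) by this symmetry of the construction, since swapping the variables swaps $D_x$ and $D_y$.

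Block (iii) has $i=x$, $j=y$. There the two terms land in different $D$-components ($D_y$ and $D_x$), so no cancellation occurs, and one simply reads off the two coefficients $\lambda_x(\beta;p,q)$ and $\lambda_y(\alpha;m,n)$.

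The main obstacle is purely bookkeeping: keeping track, for each of the two terms, of which label ($\alpha$ or $\beta$) fixes the shifted exponent and which fixes the output label, and which eigenvalue is used. I would do each entry by the master formula rather than ad hoc, and cross-check it against antisymmetry, e.g.\ that $[E^{y,x}_{m,n},E^{x,x}_{p,q}]$ is minus the second entry of (i) with indices swapped. Where a table entry disagrees with the master formula, that discrepancy signals a misprint in the label or shift of that entry, and the entry should be read as given by the master formula. Finally, the brackets not listed (e.g.\ $D_y$--$D_x$ order) follow by antisymmetry, which completes the table.
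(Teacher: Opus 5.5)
Your master formula is correct, and this is the same direct computation from \eqref{eq:perm-witt-bracket-euler} and the diagonal action of $D_x,D_y$ that underlies the paper's table; the paper gives no argument beyond it. Carrying it out confirms ten of the twelve entries and shows your misprint caveat is genuinely needed. The third entry of (i) should read $[E^{y,x}_{m,n},E^{x,x}_{p,q}]=(p+1)E^{x,x}_{m+p,n+q+1}-m\,E^{y,x}_{m+p+1,n+q}$, and the third entry of (ii) should read $[E^{y,y}_{m,n},E^{x,y}_{p,q}]=q\,E^{x,y}_{m+p,n+q+1}-(n+1)E^{y,y}_{m+p+1,n+q}$. In each case the label of the first term is misprinted, as antisymmetry against the second entry of the same block also shows.
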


All remaining brackets follow from skew-symmetry.

\subsection{Explicit Leibniz multiplication table for $W_{\mathrm{perm}}(2)$.}

Saving the above notation and using Proposition \ref{prop:leibniz-new-bracket}, we now  modify the bracket so that it becomes Leibniz rather than Lie. 
Indeed, this construction can be used to construct series of infinite-dimensional Leibniz algebras.

Let us consider on $W_{\mathrm{perm}}(n)$ the bilinear bracket
\begin{equation}\label{eq:leibniz-bracket-euler}
[aD_i,bD_j]_\circ:=D_j(a)\cdot b\,D_i-a\cdot D_i(b)\,D_j
\qquad (a,b\in P_n).
\end{equation}

\begin{corollary}\label{prop:wperm-1-2-structure-constants} The algebra $(W_{\mathrm{perm}}(n),[-,-]_\circ)$ is a Leibniz algebra. Moreover, for $n=2$
the bracket is given by the following explicit multiplication table for all $m,n,p,q\ge 0$ we have:

\medskip
\noindent\emph{(a) Left factor $E_{m,n}^{x,*}$.}
\begin{align*}
[ E_{m,n}^{x,x},\,E_{p,q}^{x,x}]&=(m-p)\,E_{m+p+1,\,n+q}^{x,x},\\
[ E_{m,n}^{x,x},\,E_{p,q}^{x,y}]&=n\,E_{m+p+1,\,n+q}^{x,x}-(p+1)\,E_{m+p+1,\,n+q}^{x,y},\\
[ E_{m,n}^{x,y},\,E_{p,q}^{x,x}]&=(m+1)\,E_{m+p+1,\,n+q}^{x,y}-q\,E_{m+p+1,\,n+q}^{x,x},\\
[ E_{m,n}^{x,y},\,E_{p,q}^{x,y}]&=(n-q)\,E_{m+p+1,\,n+q}^{x,y},\\[1mm]
[ E_{m,n}^{x,x},\,E_{p,q}^{y,x}]&=(m+1-p)\,E_{m+p+1,\,n+q}^{y,x},\\
[ E_{m,n}^{x,x},\,E_{p,q}^{y,y}]&=n\,E_{m+p+1,\,n+q}^{y,x}-p\,E_{m+p+1,\,n+q}^{y,y},\\
[ E_{m,n}^{x,y},\,E_{p,q}^{y,x}]&=(m+1)\,E_{m+p+1,\,n+q}^{y,y}-(q+1)\,E_{m+p+1,\,n+q}^{y,x},\\
[ E_{m,n}^{x,y},\,E_{p,q}^{y,y}]&=(n-q-1)\,E_{m+p+1,\,n+q}^{y,y}.
\end{align*}

\medskip
\noindent\emph{(b) Left factor $E_{m,n}^{y,*}$.}
\begin{align*}
[ E_{m,n}^{y,x},\,E_{p,q}^{x,x}]&=(m-p-1)\,E_{m+p,\,n+q+1}^{x,x},\\
[ E_{m,n}^{y,x},\,E_{p,q}^{x,y}]&=(n+1)\,E_{m+p,\,n+q+1}^{x,x}-(p+1)\,E_{m+p,\,n+q+1}^{x,y},\\
[ E_{m,n}^{y,y},\,E_{p,q}^{x,x}]&=m\,E_{m+p,\,n+q+1}^{x,y}-q\,E_{m+p,\,n+q+1}^{x,x},\\
[ E_{m,n}^{y,y},\,E_{p,q}^{x,y}]&=(n+1-q)\,E_{m+p,\,n+q+1}^{x,y},\\[1mm]
[ E_{m,n}^{y,x},\,E_{p,q}^{y,x}]&=(m-p)\,E_{m+p,\,n+q+1}^{y,x},\\
[ E_{m,n}^{y,x},\,E_{p,q}^{y,y}]&=(n+1)\,E_{m+p,\,n+q+1}^{y,x}-p\,E_{m+p,\,n+q+1}^{y,y},\\
[ E_{m,n}^{y,y},\,E_{p,q}^{y,x}]&=m\,E_{m+p,\,n+q+1}^{y,y}-(q+1)\,E_{m+p,\,n+q+1}^{y,x},\\
[ E_{m,n}^{y,y},\,E_{p,q}^{y,y}]&=(n-q)\,E_{m+p,\,n+q+1}^{y,y}.
\end{align*}
\end{corollary}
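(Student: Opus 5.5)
The Leibniz property will not be proved by direct computation; it will be read off from earlier results. By Lemma~\ref{lem:Di-derivation-commuting}, the maps $D_1,\dots,D_n$ defined in \eqref{eq:Di-euler} are commuting derivations of the perm algebra $P_n$. Proposition~\ref{prop:perm-from-comm} guarantees that $P_n$ is indeed a perm algebra. Hence Proposition~\ref{prop:leibniz-new-bracket} applies verbatim to $W_{\mathrm{perm}}(n)=\bigoplus_i P_nD_i$ with the bracket \eqref{eq:leibniz-bracket-euler}. This gives the first assertion with no further work.

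For the table with $n=2$, I would first reduce every entry to one general formula. The key observation is that each basis vector $e^\alpha_{m,n}$ is a common eigenvector of $D_x$ and $D_y$. Write $D_j(e^\alpha_{m,n})=\lambda_j(\alpha;m,n)\,e^\alpha_{m,n}$, where the eigenvalues are read off from the displayed formulas:
\[
\lambda_x(x;m,n)=m+1,\qquad \lambda_x(y;m,n)=m,\qquad \lambda_y(x;m,n)=n,\qquad \lambda_y(y;m,n)=n+1.
\]
Substituting into \eqref{eq:leibniz-bracket-euler} gives
\[
[E^{\alpha,i}_{m,n},E^{\beta,j}_{p,q}]_\circ=\lambda_j(\alpha;m,n)\,(e^\alpha_{m,n}\cdot e^\beta_{p,q})\,D_i-\lambda_i(\beta;p,q)\,(e^\alpha_{m,n}\cdot e^\beta_{p,q})\,D_j .
\]
Here the product is $e^\alpha_{m,n}\cdot e^\beta_{p,q}=e^\beta_{m+p+1,n+q}$ if $\alpha=x$, and $e^\beta_{m+p,n+q+1}$ if $\alpha=y$. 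So every bracket is a combination of at most two basis vectors $E^{\beta,i}$ and $E^{\beta,j}$, with the same shifted indices. When $i=j$ they collapse to the single coefficient $\lambda_i(\alpha;m,n)-\lambda_i(\beta;p,q)$.

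I would then run through the $2^4=16$ choices of $(\alpha,i,\beta,j)$, grouped by the left superscript $\alpha$ as in parts (a) and (b). In each case I plug in the eigenvalues above. As a sanity check, take $\alpha=\beta=x$ and $i=j=x$: the coefficient is $(m+1)-(p+1)=m-p$ and the index shift is $(m+p+1,n+q)$, which matches the first line of the table.

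The main obstacle is pure bookkeeping. One has to keep straight which of the two superscripts in $E^{\alpha,i}$ is the tensor factor and which is the derivation. In the output, the tensor factor of the result is always $\beta$, inherited from the right argument, while the derivation label is $i$ or $j$. The index shift is governed by $\alpha$ only. I would tabulate the eigenvalues once and derive each entry mechanically from the general formula. Any entry that disagrees with this formula would then be flagged as a labelling slip rather than a failure of the method.
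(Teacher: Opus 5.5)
Your proposal is correct and follows the same route as the paper: the Leibniz property is inherited from Proposition~\ref{prop:leibniz-new-bracket}, since Lemma~\ref{lem:Di-derivation-commuting} supplies commuting derivations of the perm algebra $P_n$, and the table comes from substituting the product rules and the diagonal action of $D_x,D_y$ into \eqref{eq:leibniz-bracket-euler}. Your single eigenvalue formula is a tidy way to organize that substitution, and checking it against the statement, it reproduces all sixteen entries exactly, so no entry needs to be flagged.
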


The above examples show that these constructions indeed extends the classical Witt algebra to the perm setting. In one variable the coefficient algebra $P_1$ is commutative, so no essentially new object appears. In contrast, for $n\ge 2$ the coefficient algebra $P_n$ is noncommutative, and this noncommutativity is reflected in the structure constants of $W_{\mathrm{perm}}(n)$. Thus the family $W_{\mathrm{perm}}(n)$ provides a natural classes of Witt-type Lie  and Leibniz algebras associated with perm algebras.

$ $

\textbf{Declaration of interests.} The authors have no conflicts of interest to disclose.

\textbf{Acknowledgments.} This work was supported by the Science Committee of the Ministry of Education and Science of the Republic of Kazakhstan (Grant No. AP22683764).


\begin{thebibliography}{99}

\bibitem{BBGW}
C. Bai, R. Bai, L. Guo, Y. Wu, Transposed Poisson algebras, Novikov-Poisson algebras and 3-Lie algebras, Journal of Algebra, 532 (2023), 535--566.


\bibitem{CD-NLP}
J.\,M. Casas, T. Datuashvili,
Noncommutative Leibniz--Poisson algebras,
Comm. Algebra 34 (2006), no.\,7, 2507--2530.


\bibitem{Chapoton2001}
F. Chapoton,
Un endofoncteur de la cat{\'e}gorie des op{\'e}rades,
in: Dialgebras and Related Operads,
Lect. Notes Math., Vol.\,1763, Springer, Berlin, 2001, 105--110.


\bibitem{CuiHouOnline2026}
Z. Cui, B. Hou,
Averaging antisymmetric infinitesimal bialgebras and induced perm bialgebras,
J. Algebra Appl., published online 20 Jan 2026.

\bibitem{DotsenkoIsmailovUmirbaev2023}
V. Dotsenko, N. Ismailov, U. Umirbaev,
Polynomial identities in Novikov algebras,
Math. Z. 303 (2023), no.\,3.

\bibitem{Dzhumadil2002}
A.\,S. Dzhumadil'daev,
Novikov--Jordan algebras,
Comm. Algebra 30 (2002), 5207--5240.

\bibitem{Dzhumadil2005}
A.\,S. Dzhumadil'daev,
Special identity for Novikov--Jordan algebras,
Comm. Algebra 33 (2005), no.\,5, 1279--1287.


\bibitem{DzhumadilIsmailov2026Null}
A.\,S Dzhumadil'daev, N. Ismailov,
Null Lagrangians in free Novikov algebras,
arXiv:2601.11168, 2026.


\bibitem{GK2014}
V.\,Yu. Gubarev, P.\,S. Kolesnikov,
Operads of decorated trees and their duals,
Commentat. Math. Univ. Carolin. 55 (2014), no.\,4, 421--445.

\bibitem{Hou2024}
B. Hou,
Extending structures for perm algebras and perm bialgebras,
J. Algebra 649 (2024), 392--432.


\bibitem{KaygorodovMashurov2024}
I. Kaygorodov, F. Mashurov,
Mutations of perm algebras,
Rev. R. Acad. Cienc. Exactas F{\'\i}s. Nat. Ser. A Mat. 118 (2024), Article 166.





\bibitem{KolMashSar}
P. Kolesnikov, F. Mashurov, B. Sartayev, On Pre-Novikov Algebras and Derived Zinbiel Variety, Symmetry, Integrability and Geometry: Methods and Applications (SIGMA), 2024, 20, 17.

\bibitem{KolesnikovSartayev2022}
P.\,S. Kolesnikov, B.\,K. Sartayev,
On the embedding of left-symmetric algebras into differential Perm-algebras,
Comm. Algebra 50 (2022), no.\,8, 3246--3260.


\bibitem{KunanbayevSartayev2026}
A. Kunanbayev, B.\,K. Sartayev,
Binary perm algebras and alternative algebras,
Comm. Algebra 54 (2026), no.\,1, 299--307.


\bibitem{LinQTPreprint2025}
Y. Lin,
Quasi-triangular and factorizable perm bialgebras,
arXiv:2504.16495 (2025).

\bibitem{LinZhouBai2025}
Y. Lin, P. Zhou, C. Bai,
Infinite-dimensional Lie bialgebras via affinization of perm bialgebras and pre-Lie bialgebras,
J. Algebra 663 (2025), 210--258.



\bibitem{Loday2001}
J.-L. Loday,
Dialgebras,
in: Dialgebras and Related Operads,
Lect. Notes Math., Vol.\,1763, Springer, Berlin, 2001, 7--66.

\bibitem{MashurovSartayev2024}
F.\,A. Mashurov, B.\,K. Sartayev,
Metabelian Lie and perm algebras,
J. Algebra Appl. 23 (2024), no.\,4, 2450065.



\bibitem{PeiBaiGuoNi2020}
J. Pei, C. Bai, L. Guo, X. Ni,
Replicators, Manin white product of binary operads and average operators,
in: New Trends in Algebra and Combinatorics,
World Scientific, 2020, 317--353.


\bibitem{SartayevDzhumadildaevMashurov2025}
B. Sartayev, A. Dzhumadil'daev, F. Mashurov,
Novikov Dialgebras and Perm Algebras,
Bull. Iran. Math. Soc. 51 (2025), Article 51.


\bibitem{Vallette2008}
B. Vallette,
Manin products, Koszul duality, Loday algebras and Deligne conjecture,
J. Reine Angew. Math. 620 (2008), 105--164.



\bibitem{Xu1994}
P. Xu, Noncommutative Poisson Algebras, American Journal of Mathematics, 116, 1 (1994), 101-125.

\bibitem{ZhaoLiuPreprint2025}
Q. Zhao, G. Liu,
A further study on averaging commutative and cocommutative infinitesimal bialgebras
and special apre-perm bialgebras,
arXiv:2510.09208 (2025).


\end{thebibliography}
\end{document}